\definecolor{Red}{rgb}{1,0,0}
\theoremstyle{plain}
\newtheorem{lemma}{Lemma}[section]
\newtheorem{theorem}[lemma]{Theorem}
\newtheorem{corollary}[lemma]{Corollary}
\newtheorem{proposition}[lemma]{Proposition}
\newtheorem{conjecture}[lemma]{Conjecture}
\theoremstyle{definition}
\newtheorem{definition}[lemma]{Definition}
\newtheorem{example}[lemma]{Example}
\newtheorem{question}[lemma]{Question}
\newcommand{\N}{\mathbb{N}}
\newcommand{\Q}{\mathbb{Q}}
\newcommand{\R}{\mathbb{R}}
\newcommand{\RP}{\mathbb{R}\mathrm{P}}
\newcommand{\Z}{\mathbb{Z}}
\newcommand{\vr}[2]{\mathrm{VR}(#1;#2)}
\newcommand{\vrleq}[2]{\mathrm{VR}_\leq(#1;#2)}
\newcommand{\vrless}[2]{\mathrm{VR}_<(#1;#2)}
\newcommand{\cech}[2]{\mathrm{\check{C}}(#1;#2)}
\newcommand{\cechless}[2]{\mathrm{\check{C}}_<(#1;#2)}
\newcommand{\cechleq}[2]{\mathrm{\check{C}}_\le(#1;#2)}
\newcommand{\cechmleq}[2]{\mathrm{\check{C}}^m_\leq(#1;#2)}
\newcommand{\cechmless}[2]{\mathrm{\check{C}}^m_<(#1;#2)}
\newcommand{\vrm}[2]{\mathrm{VR}^m(#1;#2)}
\newcommand{\vrmleq}[2]{\mathrm{VR}^m_\leq(#1;#2)}
\newcommand{\vrmless}[2]{\mathrm{VR}^m_<(#1;#2)}
\newcommand{\diam}{\mathrm{diam}}
\newcommand{\so}{\mathrm{SO}}
\newcommand{\Gr}{\mathrm{Gr}}
\begin{document}

\title{Metric thickenings and group actions}
\author{Henry Adams}
\email{adams@math.colostate.edu}
\author{Mark Heim}
\email{mark.heim@colostate.edu}
\author{Chris Peterson}
\email{peterson@math.colostate.edu}

\begin{abstract}
Let $G$ be a group acting properly and by isometries on a metric space $X$; it follows that the \emph{quotient} or \emph{orbit} space $X/G$ is also a metric space.
We study the Vietoris--Rips and \v{C}ech complexes of $X/G$.
Whereas (co)homology theories for metric spaces let the scale parameter of a Vietoris--Rips or \v{C}ech complex go to zero, and whereas geometric group theory requires the scale parameter to be sufficiently large, we instead consider intermediate scale parameters (neither tending to zero nor to infinity).
As a particular case, we study the Vietoris--Rips and \v{C}ech thickenings of projective spaces at the first scale parameter where the homotopy type changes.
\end{abstract}

\subjclass[2010]{
55U10, 
55P10, 
54E35, 
05E45, 
20F65
}

\maketitle

\section{Introduction}

Vietoris--Rips and \v{C}ech complexes are geometric constructions which transform a metric space $X$ into a simplicial complex depending on the choice of a scale parameter $r$.
Indeed, the Vietoris--Rips complex $\vr{X}{r}$ includes as its simplices all finite subsets of $X$ of diameter at most $r$, and the \v{C}ech complex $\cech{X}{r}$ includes all finite subsets of $X$ contained in a ball of radius $r$.
These complexes have been used in nerve lemmas~\cite{Borsuk1948} to relate homotopy types of spaces with good covers thereof.
They have also been used to define (co)homology theories for metric spaces~\cite{Hausmann1995,lefschetz1942algebraic,Vietoris27}.
Indeed, one can associate to a metric space the homology or cohomology of its Vietoris--Rips or \v{C}ech simplicial complex and then take the limit as the positive scale parameter goes to zero.

Vietoris--Rips complexes were independently developed for use in geometric group theory as a way to thicken a metric space, i.e.\ to view it from a zoomed-out perspective~\cite{Gromov}.
In particular, one can use Vietoris--Rips complexes to construct finite-dimensional Eilenberg--MacLane spaces for torsion-free hyperbolic groups (Theorem~3.21 of~\cite{bridson2011metric}).
Indeed, let $G$ be a hyperbolic group, equipped with the shortest path metric on its Cayley graph for some choice of generators.
Then $\vr{G}{r}$ is contractible for scale $r$ sufficiently large, $G$ acts simplicially, and if $G$ is torsion free, then this produces a finite-dimensional model $\vr{G}{r}/G$ for the  Eilenberg--MacLane space $K(G,1)$.
Vietoris--Rips complexes have also been connected to Bestvina--Brady Morse theory~\cite{zaremsky2019}, singular homology theories depending on a choice of scale~\cite{goldfarb2016singular}, notions of homotopy type depending on a choice of scale~\cite{brodskiy2007rips,berestovskii2007uniform}, Borsuk--Ulam theorems into higher-dimensional codomains~\cite{ABF}, and to the filling radius in quantitative topology~\cite{lim2020vietoris,gromov1983filling,katz1991neighborhoods}.

More recently, in applied and computational topology, Vietoris--Rips and \v{C}ech complexes have been used to recover the ``shape'' of a dataset.
Indeed, there are theoretical guarantees that if $X$ is a sufficiently nice sample from an unknown underlying space $M$, then one can recover the homotopy types, homology groups, or approximate persistent homology of $M$ from $X$~\cite{ChazalDeSilvaOudot2013,ChazalOudot2008,Latschev2001,virk2019rips}.
In data analysis contexts, instead of letting $r$ be arbitrarily small (as for (co)homology theories), and instead of letting $r$ be sufficiently large (as in geometric group theory), we instead are interested in an intermediate range of scale parameters $r$.
Indeed, if $r$ is smaller than the distance between any two data points in $X$, then $\vr{X}{r}=X$ is a disjoint union of points.
Conversely, if $r$ is larger than the diameter of $X$, then $\vr{X}{r}$ is necessarily contractible.
Neither of these regimes help us describe the ``shape'' of dataset $X$.
Instead, the interesting topology appears when scale $r$ is varied in an intermediate regime, as computed by persistent homology.
These varying regimes of scale parameters ($r$ small, $r$ intermediate, $r$ large) are analogous to the subcritical, critical, and super-critical regimes in random topology~\cite{bobrowski2018topology,kahle2011random}.

As a finite dataset $X$ converges (say, as more samples are drawn) to an underlying infinite space $M$, the persistent homology of $\vr{X}{r}$ converges to that of $\vr{M}{r}$~\cite{ChazalDeSilvaOudot2013}. 
There has thus been interest in the literature to identify the homotopy types of the Vietoris--Rips complexes of manifolds.
Essentially, the only examples that are fully understood are the Vietoris--Rips complexes of the circle~\cite{AA-VRS1,AAFPP-J}, which obtain the homotopy types of all odd spheres as the scale parameter increases, before they finally become contractible.
We have a countably infinite number of ``phase transitions'' from one odd-dimensional sphere $S^{2k-1}$ to the next one $S^{2k+1}$ as the scale increases, demonstrating the complexity of the situation.
Vietoris--Rips thickenings of $n$-spheres for $n>1$ are understood only up to the first change in homotopy type~\cite{AAF}.
The 1-dimensional persistent homology of geodesic spaces is also understood~\cite{gasparovic2018complete,virk20181}.

In this paper we take one step towards merging the perspectives on Vietoris--Rips complexes provided by geometric group theory and by applied topology.
We study Vietoris--Rips complexes of spaces which are equipped with a group action (as in geometric group theory) but in the range of intermediate scale parameters (as in applied topology). 
More specifically, let $G$ be a group acting properly and by isometries on a metric space $X$; it follows that the quotient space $X/G$ is a metric space.
We study the Vietoris--Rips complexes of the quotient space $X/G$.
Our first results are for small scale parameters (but not tending to zero), in which we are able to show that the Vietoris--Rips complex of the quotient, namely $\vr{X/G}{r}$, is isomorphic to the quotient of the Vietoris--Rips complex, namely $\vr{X}{r}/G$.
We furthermore identify which scale parameters lie in this regime in terms of the quantitative properties of the group action.
Our results apply not only for the Vietoris--Rips simplicial complex but also for the Vietoris--Rips metric thickening~\cite{AAF}, and we give analogous results for \v{C}ech simplicial complexes and \v{C}ech metric thickenings.

We also consider a slightly larger regime of scale parameters for projective spaces.
Let $S^n$ be the $n$-sphere equipped with the geodesic metric\footnote{Analogous results also hold with the Euclidean metric on $S^n$, with the relevant scale parameters being adjusted accordingly, and with no change to the homotopty types.
We restrict attention to the geodesic metric for convenience.} such that the circumference of any great circle is one.
The sphere is naturally equipped with a $G=(\lbrace \pm 1\rbrace, \times)\cong\Z/2\Z$ action, which exchanges each point $x$ with its antipode $-x$.
Let $\RP^n=S^n/(x\sim-x)=S^n/G$ be real projective space equipped with the quotient metric.
Note that with the quotient metric, the circumference of any great circle in $\RP^n$ is $\frac{1}{2}$.
We demonstrate that $\vr{\RP^n}{r}$ is homotopy equivalent to $\RP^n$ for all $r$ less than $\tfrac{1}{6}$, which is the diameter of an inscribed equilateral triangle in any great circle of $\RP^n$.
Furthermore, we study the metric thickening $\vrmleq{\RP^n}{r}$ at the first scale parameter, namely $r=\frac{1}{6}$, where the homotopy type changes.
In doing so, we leverage the fact that $\RP^n$ is the quotient of $S^n$ under the antipodal action.
We prove that $\vrmleq{\RP^n}{\frac{1}{6}}$ has the homotopy type of a $(2n+1)$-dimensional CW complex and hence has trivial homology and cohomology in dimensions $2n+2$ and above.

As one example application of our work, suppose $X$ is an unknown space of confirmations of a molecule, or perhaps only those confirmations of a molecule whose associated energy is bounded from above by a chosen energy cutoff.
If the molecule has a group $G$ of symmetries, then $G$ will act on the space $X$.
Given a random sample $Y_n$ of $n$ points from $X$, recovered for example by molecular dynamics, one might try to estimate the topology of conformation space $X$ by computing the persistent homology of the Vietoris--Rips complexes $\vr{Y_n}{r}$ as $r$ varies.
As one forms a denser and denser sample by increasing the number of random points $n$, the persistent homology of $\vr{Y_n}{r}$ converges to that of $\vr{X}{r}$ and hence can be used to estimate the homology groups of $X$~\cite{ChazalDeSilvaOudot2013}.
How would this experiment compare if instead one first quotiented out by the molecular symmetries $G$ and instead considered a finite sample $Y'_n$ of $n$ points from $X/G$?
As $n$ goes to infinity, the persistent homology of $\vr{Y'_n}{r}$ will converge to that of $\vr{X/G}{r}$.
Our results show that these two experiments are consistent in the following sense.
For scale $r$ small enough, the quotient of $\vr{X}{r}$ by the symmetry group $G$ is isomorphic to $\vr{X/G}{r}$ as simplicial complexes, and therefore quotienting out by the group of symmetries affects the experiment, and the predicted topological types, in a way that is understood.
Furthermore, we give precise bounds on which scale parameters $r$ are small enough for such results to hold.

\section{Preliminaries}

We recall a few standard preliminaries in point set topology and algebraic topology that will lead into an introduction to Vietoris--Rips and \v{C}ech simplicial complexes and thickenings.

\subsection*{Metric spaces}

A metric space $(X, d)$ is a set $X$ equipped with a metric $d \colon X\times X\to \R$ satisfying the following properties:

\begin{itemize}
\item $d(x,y)$ is a nonnegative real number for all choices of $x$ and $y$ in $X$,
\item $d(x,y)$ is zero if and only if $x = y$, and
\item $d(x,y) + d(y,z) \geq d(x, z)$.
\end{itemize}
For $x\in X$ and $r>0$, we let $B(x,r)=\{y\in X~|~d(x,y)<r\}$ denote the open ball in $X$ of radius $r$ about $x$.
Given a subset $Y\subseteq X$ of a metric space, we let $\diam(Y)=\sup\{d(x,x')~|~x,x'\in Y\}$ denote the diameter of this subset.
Metric spaces are a commonly studied topic in mathematics, and they are generalized by topological spaces, which also have a notion of open neighborhoods but need not have a notion of distance.




\subsection*{Simplicial complexes}

A simplex on the vertices $v_0, v_1, v_2, \ldots, v_k$ may be thought of as the convex hull of these points when they are placed at the location of the standard basis vectors $e_i$ in Euclidean space.
A \emph{simplicial complex} is a union of simplices joined together by gluing maps.
More precisely, given a set of vertices $V$, an \emph{abstract simplicial complex} $K$ is a collection of subsets of $V$ (called \emph{simplices}) containing all singleton sets, with the property that if $\sigma\in K$ is a simplex and $\tau\subseteq \sigma$, then we also have $\tau\in K$.
The geometric realization of a simplicial complex is a way to turn this combinatorial data into a topological space containing vertices, edges, triangles, tetrahedra, and so forth; in this paper we identify abstract simplicial complexes with their geometric realizations.

\subsection*{Vietoris--Rips simplicial complexes}

Let $X$ be a metric space and let $r\ge 0$ be a scale parameter.
A \emph{Vietoris--Rips simplicial complex} $\vrleq{X}{r}$ is a simplicial complex with vertex set $X$ in which the simplex $\{x_0, x_1, \ldots, x_k\}$ is in the complex if, for all $0\le i,j\le k$, the pairwise distance between $x_i$ and $x_j$ is at most $r$.
We instead write $\vrless{X}{r}$ when the pairwise distances are required to be strictly less than $r$ and $\vr{X}{r}$ when the distinction is not important.

\subsection*{\v{C}ech simplicial complexes}

Let $X$ be a metric space and let $r\ge 0$ be a scale parameter.
A \emph{\v{C}ech simplicial complex} $\cechless{X}{r}$ is a simplicial complex with vertex set $X$ in which the simplex $\{x_0, x_1, \ldots, x_k\}$ is in the complex if $\cap_{i=0}^k B(x_i,r)\neq\emptyset$.
We write $\cechleq{X}{r}$ to instead specify the use of closed balls $B_\le(x,r)=\{y\in X~|~d(x,y)\le r\}$.

\subsection*{Vietoris--Rips and \v{C}ech metric thickenings}

As sets, the Vietoris--Rips and \v{C}ech metric thickenings~\cite{AAF} are identical to the geometric realizations of the corresponding Vietoris--Rips and \v{C}ech simplicial complexes.
However, they are equipped with a different topology, indeed a metric, which sometimes produces a different (and often more natural) homeomorphism type, or it can even produce a different homotopy type.

More explicitly, let $X$ be a metric space and let $r\ge 0$.
The \emph{Vietoris--Rips metric thickening} is the set
\[ \vrmleq{X}{r}=\left\{\sum_{i=0}^k\lambda_i\delta_{x_i}~|~k\in\N,\ \lambda_i\ge0,\ \sum\lambda_i=1,\ \diam(\{x_0,\ldots,x_k\})\le r\right\}, \]
equipped with the 1-Wasserstein metric~\cite{edwards2011kantorovich,kellerer1984duality,kellerer1982duality}.
Here $\delta_x$ denotes the Dirac delta probability measure with mass one at $x\in X$.
Roughly speaking, one can think of a measure as a mass distribution.
From this viewpoint, two measures can be thought of as two mass distributions, and the 1-Wasserstein distance between the two measures is the minimum amount of work required to transport the mass in the first mass distribution to the mass in the second mass distribution.
This is sometimes called the earth mover's distance~\cite{villani2008optimal}.
The definition for $\vrmless{X}{r}$ is analogous.

We remark that the inclusion $X\hookrightarrow\vrm{X}{r}$ defined by $x\mapsto \delta_x$ is continuous (in fact an isometry onto its image), whereas the analogous inclusion $X\hookrightarrow\vr{X}{r}$ into the simplicial complex is not continuous for $X$ not discrete.
It is also worth remarking that the simplicial complex $\vr{X}{r}$ is not metrizable if it is not locally finite by Proposition~4.2.16(2) of~\cite{sakai2013geometric} even though the input $X$ is a metric space.
By contrast, the thickening $\vrm{X}{r}$ is always a metric space, and furthermore $\vrm{X}{r}$ is an \emph{$r$-thickening} of $X$ by Lemma~3.6 of~\cite{AAF}.

The \emph{\v{C}ech metric thickening} is the set
\[ \cechmless{X}{r}=\left\{\sum_{i=0}^k\lambda_i\delta_{x_i}~|~k\in\N,\ \lambda_i\ge0,\ \sum\lambda_i=1,\ \cap_{i=0}^k B(x_i,r)\neq\emptyset\right\}, \]
again equipped with the 1-Wasserstein metric.

For the remainder of the paper, we refer to a point $\sum \lambda_i \delta_{x_i}$ in a metric thickening simply as $\sum \lambda_i x_i$.
This allows us to let $\sum \lambda_i x_i$ refer to either a point in a metric thickening or to a point in the geometric realization of a simplicial complex.

\section{Group actions and Vietoris--Rips thickenings}\label{sec:group-actions}

Let a group $G$ act on a metric space $X$.
We study Vietoris--Rips complexes and thickenings of $X/G$.
We begin in Section~\ref{ssec:metrizable} by describing when $X/G$ is metrizable.
In Section~\ref{ssec:quantitative-actions}, we survey additional metric assumptions on the action of $G$ on $X$.
These additional assumptions allow us, in Section~\ref{ssec:thickenings}, to relate $\vr{X/G}{r}$ to $\vr{X}{r}/G$.
We end with examples in Section~\ref{ssec:examples}.

\subsection{Metrizable quotient spaces}\label{ssec:metrizable}

An action of a group $G$ on a set $X$ is a function $G \times X \to X$, denoted $(g,x)\mapsto g\cdot x$, satisfying $g \cdot (h \cdot x) = (gh) \cdot x$ for all $g,h\in G$ and $x\in X$, and satisfying $e\cdot x = x$ for all $x\in X$ (where $e$ is the identity element of $G$).
The orbit of an element $x\in X$, under the action of $G$, is the set $\mathcal O(x)=\lbrace g \cdot x ~|~ g \in G \rbrace$. Note that $x\in \mathcal O(x)$ and that, for any two elements $x,y\in X$, either $\mathcal O(x)=\mathcal O(y)$ or $\mathcal O(x)\cap \mathcal O(y)=\emptyset$.
As a consequence, the orbits of the group action partition $X$.

Let $G$ be a group acting on a metric space $X$.
We say that $G$ \emph{acts by isometries} on $X$ if, for each $g\in G$, the map $g\colon X\to X$ defined by $x\mapsto g\cdot x$ is an isometry.
In other words, we have a homomorphism from $G$ into the group of isometries of $X$.
Furthermore, we say that the action of $G$ on $X$ is \emph{proper} if, for each $x\in X$, there exists some $r>0$ such that the set $\{g\in G~|~g\cdot B(x,r)\cap B(x,r)\neq\emptyset\}$ is finite.
In particular, an action by a finite group is necessarily proper.

For $x\in X$, we let $[x]$ denote the corresponding orbit in $X/G$.
It follows from Proposition~I.8.5 of~\cite{bridson2011metric}
that if $G$ acts properly by isometries on $X$, then the quotient space $X/G$ is itself a metric space.
Its \emph{quotient metric} is defined via
\begin{equation}\label{eq:quotient-metric}
d_{X/G}([x],[x'])=\inf_{g\in G}d_X(x,g\cdot x').
\end{equation}
The assumption of a proper action rules out examples such as the action of the rationals $\Q$ on the reals $\R$ by addition in which the quotient space $\R/\Q$ is not metrizable.\footnote{
Other contexts in which $X/G$ is a metric space, with the quotient metric as described above, are in Section~5 of~\cite{bridson2011metric},
Sections~3 and 10 of~\cite{BuragoBuragoIvanov}, and
Chapters~4--7 of~\cite{herman1968quotients}.
}

If the metric space $X$ is equipped with an isometric action of $G$, it follows that the Vietoris--Rips complexes $\vr{X}{r}$ are also equipped with an action of $G$.
Indeed, given any point $\sum \lambda_i x_i \in \vr{X}{r}$ and $g\in G$, we define $g \cdot \sum \lambda_i x_i = \sum \lambda_i g\cdot  x_i$.
For $\sum \lambda_i x_i\in\vr{X}{r}$, we let $[\sum \lambda_i x_i]$ denote the corresponding orbit in $\vr{X}{r}/G$.
Analogous actions can be defined on the Vietoris--Rips thickening $\vrm{X}{r}$ as well as on \v{C}ech complexes and thickenings.

If $G$ acts properly by isometries on $X$, then $X/G$ is a metric space with distance given by~\eqref{eq:quotient-metric}, and so we can define its Vietoris--Rips and \v{C}ech simplicial complexes and thickenings.
Our goal in this section will be to explain the relationship between $\vr{X/G}{r}$ and $\vr{X}{r}/G$ when $r$ is small, and analogously for the \v{C}ech and the metric thickening versions.

\subsection{Different types of group actions}\label{ssec:quantitative-actions}

We now survey a list of increasingly stringent properties that the action of $G$ on $X$ could satisfy.
The definition of a \emph{free action} requires $X$ to be a set, the definition of a \emph{covering space action} requires $X$ to be a topological space, and the definitions of an \emph{$r$-ball action} and an \emph{$r$-distance action} require $X$ to be a metric space.
Free actions and covering space actions are classical: see~\cite{bridson2011metric} for a wide variety of properties that a group acting on a metric space could satisfy, such as being faithful, free, cocompact, or proper.
We introduce $r$-ball and $r$-distance action as quantitative versions of these properties.
\begin{itemize}
\item The action of $G$ on $X$ is \emph{free} if $g\cdot x=x$ for any $x\in X$ implies that $g$ is the identity element in $G$.
\item The action of $G$ on $X$ is a \emph{covering space action} if every point $x\in X$ has a neighborhood $U\ni x$ such that if $U\cap g\cdot U\neq\emptyset$, then $g$ is the identity element in $G$.
See Section~1.3 of~\cite{Hatcher} or pages 311--312 of~\cite{lee2010introduction} for more details; the term \emph{covering space action} is introduced by Hatcher in part in order to disambiguate terminology.
\item For $r>0$, we define the action of $G$ on $X$ to be an \emph{$r$-ball action} if $B(x,r)\cap g\cdot B(x,r)\neq\emptyset$ for any $x\in X$ implies that $g$ is the identity element in $G$.
\item For $r>0$, we define the action of $G$ on $X$ to be an \emph{$r$-distance action} if $d_X(x,g\cdot x)<r$ for any $x\in X$ implies that $g$ is the identity element in $G$.
\end{itemize}
We have the following sequence of proper inclusions for $r>0$:
\[2r\text{-distance actions}\subset r\text{-ball actions}\subset\text{covering space actions}\subset\text{free actions}.\]

\begin{definition}\label{def:r-diameter}
Let $r>0$.
The action of $G$ on a metric space $X$ is an \emph{$r$-diameter action} when, for any nonnegative integer $k$, $\diam_{X/G}\{[x_0],\ldots,[x_k]\}< r$ implies that there exists a unique choice of elements $g_i\in G$ for $1\le i\le k$ such that $\diam_X\{x_0,g_1\cdot x_1\ldots,g_k\cdot x_k\}=\diam_{X/G}\{[x_0],\ldots,[x_k]\}$.
\end{definition}

We claim that an $r$-diameter action is also an $r$-distance action.
Indeed, suppose $r>d_X(x,g\cdot x)\ge\diam_{X/G}\{[x],[g\cdot x]\}$.
Then the $r$-diameter action assumption implies there exists a unique element $g\in G$ such that $d_X(x,g\cdot x)=\diam_X\{x,g\cdot x\}=\diam_{X/G}\{[x],[g\cdot x]\}=0$, i.e.\ necessarily $x=g\cdot x$, and so $g$ is the identity since $r$-diameter actions are free.
Hence, this is an $r$-distance action.

However, we give the following example to show that $r$-distance actions are not necessarily $r$-diameter actions.

\begin{example}
\label{ex:ZactR}
Let $G=\Z$ act on $X=\R$ by translation, i.e., for any $g\in \Z$ and $x\in \R$ we have $g\cdot x=g + x$.
Note this action is a $1$-distance action,
since if $1>d(x,g\cdot x)=d(x,g+x)=|g|$, then $g=0$ is the identity in $\Z$.
Clearly, it is not a $(1+\varepsilon)$-distance action for any $\varepsilon>0$.
This action is not an $r$-diameter action for any $r>\frac{1}{3}$ since we have $\diam_{\R/\Z}\{[0],[\frac{1}{3}],[\frac{2}{3}]\}=\frac{1}{3}$, but $\min_{g_1,g_2\in\Z}\diam_\R\{0,g_1+\frac{1}{3},g_2+\frac{2}{3}\}=\frac{2}{3}$.
One can check that this is an $r$-diameter action for $r\leq\frac{1}{3}$.
\end{example}

\begin{definition}\label{def:r-nerve}
Let $r>0$.
The action of $G$ on $X$ is an \emph{$r$-nerve action} when, for any nonnegative integer $k$, $\cap_{i=0}^k B_{X/G}([x_i],r)\neq \emptyset$ implies that there exists a unique choice of elements $g_i\in G$ for $1\le i\le k$ such that $\cap_{i=0}^k B_{X}(g_i\cdot x_i,r)\neq \emptyset$, where we require $g_0$ is the identity element of $G$.
\end{definition}

We now demonstrate that $\Z$ acting on $\R$ by translation is both a $\frac{1}{2}$-ball action and a $\frac{1}{4}$-nerve action, with the $r$ parameters being as high as possible, demonstrating that $r$-nerve actions and $r$-ball actions are distinct concepts.

\begin{example}\label{ex:Z-act-R}
We first show that $\Z$ acting on $\R$ by translation is a $\frac{1}{2}$-ball action.
Select an arbitrary open ball of radius $\frac{1}{2}$ in $\R$.
Then, it follows that this action is a $\frac{1}{2}$-ball action since, for any $0\neq g \in \Z$, the balls $B(x, \frac{1}{2})$ and $g\cdot B(x,\frac{1}{2})$ do not intersect.
So $\Z$ acting on $\R$ by translation is a $\frac{1}{2}$-ball action.

We will now demonstrate that $\Z$ acting on $\R$ by translation is not a $(\frac{1}{2} + \varepsilon)$-ball action for any $\varepsilon > 0$.
Now, take an arbitrary $\frac{1}{2} + \varepsilon$ ball.
Then, we note that $B(x, \frac{1}{2} + \varepsilon)$ and the action of $g = 1 \in \Z$ result in an intersection of $B(x, r)$ and $g\cdot B(x, r)$ and yet $g$ is not the identity element.
Therefore, $\Z$ acting on $\R$ by translation is not a $(\frac{1}{2} + \varepsilon)$-ball action for any $\varepsilon > 0$.

Now, we show that $\Z$ acting on $\R$ is a $\frac{1}{4}$-nerve action.
Take an arbitrary set of intersecting balls of radius $\frac{1}{4}$, namely $\cap_{i=0}^k B_{\R/\Z}([x_i],\frac{1}{4})\neq\emptyset$.
These balls are open intervals of length half the circumference of the circle, and, hence, they intersect in a single, smaller, connected interval.
It follows that once $g_0=0\in \Z$ is chosen, there exists a unique choice of elements $g_i\in \Z$ for $1\le i\le k$ such that $\cap_{i=0}^k B_{\R}(g_i\cdot x_i,r)\neq \emptyset$, as required (see Figure~\ref{fig:Z-act-R}).

Finally, for any $\varepsilon > 0$, the action of $\Z$ on $\R$ by translation is not a $(\frac{1}{4} + \varepsilon)$-nerve action, demonstrating that $\frac{1}{4}$ is indeed the maximum parameter we can use.
To see this, take two balls of radius $\frac{1}{4} + \varepsilon$ in $\R/\Z$ centered at $[0]$ and $[\frac{1}{2}]$.
These two balls intersect at both $[\frac{1}{4}]$ and $[\frac{3}{4}]$ in $\R/\Z=S^1$.
The two components of intersection correspond to the fact that when we lift to balls of the form $B_\R(0,\frac{1}{4} + \varepsilon)$ and $g_1\cdot B_\R(\frac{1}{2},\frac{1}{4} + \varepsilon)$,
we can maintain a nontrivial intersection either by choosing $g_1=0$ or $g_1=-1$.
Since this choice of $g_1$ is not unique, the action of $\Z$ on $\R$ by translation is not a $(\frac{1}{4} + \varepsilon)$-nerve action.
\end{example}

\begin{figure}[htb]
\centering
\includegraphics[width=0.55\textwidth]{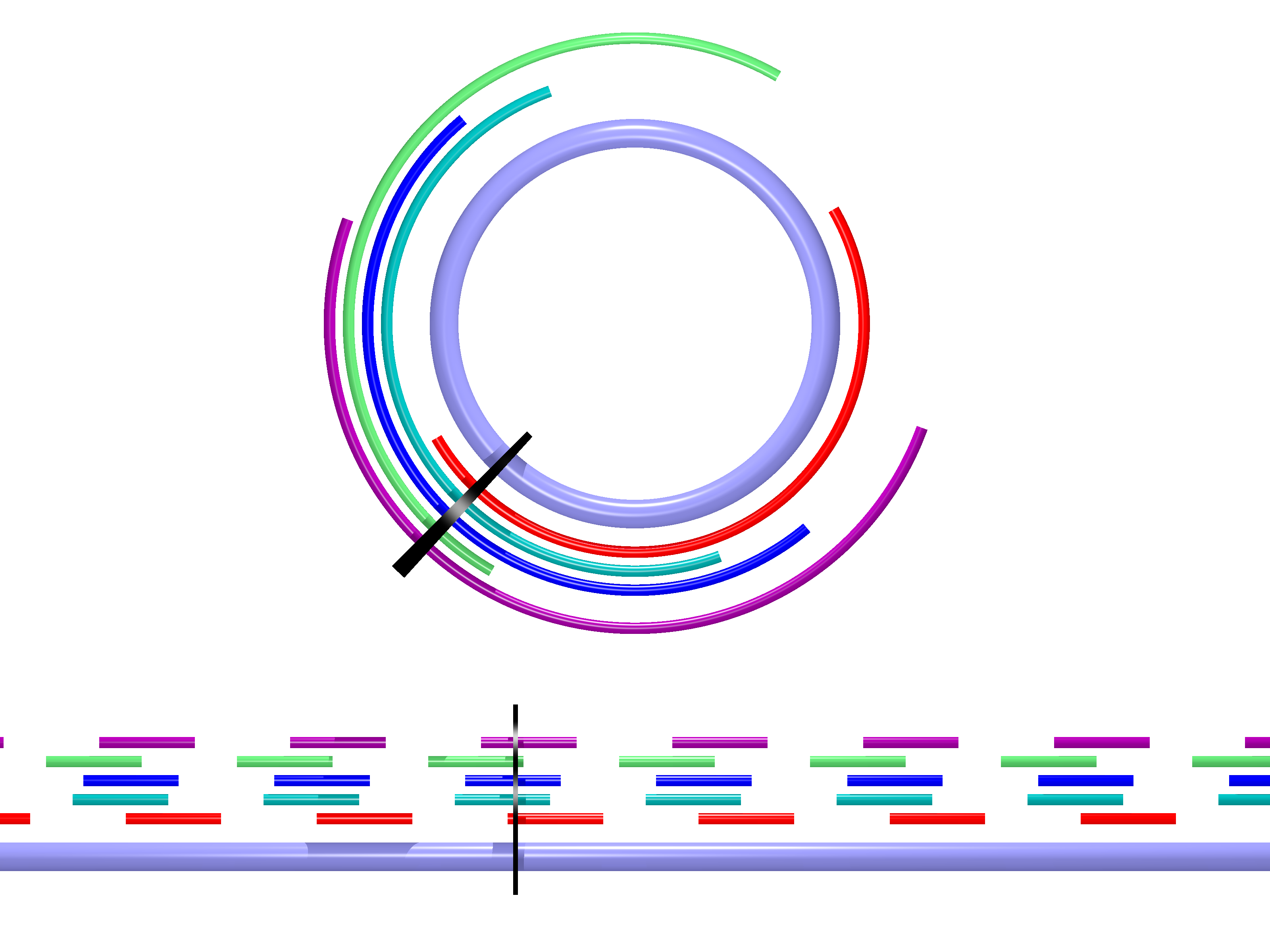}
\caption{As explained in Example~\ref{ex:Z-act-R}, the action of $\Z$ on $\R$ is a $\frac{1}{4}$-nerve action.
Indeed, let $x_0\in \R$, indicated above by the black tick mark, and consider the ball $B_{\R/\Z}([x_0],\frac{1}{4})$, drawn as the blue arc on the circle.
For any other open ball of radius $\frac{1}{4}$ in the circle, there is at most one lift of this ball to $\R$ that intersects $B_{\R}(x_0,\frac{1}{4})$.
}
\label{fig:Z-act-R}
\end{figure}


\subsection{Group actions and metric thickenings}\label{ssec:thickenings}

Let $G$ be a group acting properly and by isometries on a metric space $X$.
We now explain how to understand the Vietoris--Rips and \v{C}ech simplicial complexes and metric thickenings of $X/G$ for sufficiently small scale parameters depending on the behavior of the action of $G$.

\begin{proposition}\label{prop:group-action}
Let $G$ be a group acting properly and by isometries on a metric space $X$.
If the action is a $t$-diameter action, then 
\begin{itemize}
    \item $\vrless{X/G}{r}$ is isomorphic to $\vrless{X}{r}/G$ for all $r\le t$,
    \item $\vrleq{X/G}{r}$ is isomorphic to $\vrleq{X}{r}/G$ for all $r<t$,
    \item $\vrmless{X/G}{r}$ is homeomorphic to $\vrmless{X}{r}/G$ for all $r\le t$, and
    \item $\vrmleq{X/G}{r}$ is homeomorphic to $\vrmleq{X}{r}/G$ for all $r<t$.
\end{itemize}
\end{proposition}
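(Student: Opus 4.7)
The plan is to exhibit the natural quotient map and show it realizes all four isomorphisms and homeomorphisms in parallel. On the simplicial level, send the orbit of a simplex $\{x_0,\dots,x_k\}$ in $\vr{X}{r}/G$ to the simplex $\{[x_0],\dots,[x_k]\}$ in $\vr{X/G}{r}$; on the thickening, extend by $\sum \lambda_i x_i \mapsto \sum \lambda_i [x_i]$. Each assignment is well-defined because $d_{X/G}([x_i],[x_j]) \le d_X(x_i,x_j)$, is $G$-equivariant with trivial action on the target, and so descends to a map $\pi$ out of the $G$-quotient. Surjectivity is immediate from the $t$-diameter axiom: a simplex in $\vr{X/G}{r}$ has quotient diameter strictly below $t$ under the scale hypothesis, so the axiom produces a lift $\{x_0, g_1 x_1, \dots, g_k x_k\}$ of $X$-diameter equal to the quotient diameter, and that set is a simplex of $\vr{X}{r}$ in the right $G$-orbit.

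The heart of the proof is injectivity. Suppose two simplices $\sigma = \{x_0,\dots,x_k\}$ and $\tau = \{y_0,\dots,y_k\}$ of $\vr{X}{r}$ have the same image. Since $t$-diameter implies $t$-distance and the scale bounds force every pairwise distance in $\sigma$ to lie strictly below $t$, no two vertices of $\sigma$ (or of $\tau$) can share an orbit, so after relabeling one has $[x_i]=[y_i]$ and may write $y_i = h_i x_i$; translating $\tau$ by $h_0^{-1}$ further reduces to $h_0 = e$, and the task becomes to show $h_i = e$ for every $i \ge 1$. I plan to deduce this by applying the uniqueness clause of the $t$-diameter axiom to the tuple $(x_0, x_1, \dots, x_k)$: both $(g_1,\dots,g_k)=(e,\dots,e)$ (corresponding to $\sigma$) and $(g_1,\dots,g_k)=(h_1,\dots,h_k)$ (corresponding to $\tau$) yield lifts of the shared orbits whose diameters in $X$ both fall in the allowed range. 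An auxiliary claim --- that in this scale range every simplex of $\vr{X}{r}$ already realizes its quotient diameter --- then ensures that both choices are minimizers, and uniqueness collapses $(h_1,\dots,h_k)$ onto $(e,\dots,e)$. I expect this auxiliary diameter-matching lemma to be the main obstacle, to be established by feeding an enlarged tuple combining $\sigma$ and $\tau$ into the uniqueness clause in combination with the $t$-distance property.

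For the metric-thickening statements, the simplicial bijection extends formally to convex combinations. The forward map is $1$-Lipschitz in the $1$-Wasserstein metric because any transport plan on $X$ projects to a plan on $X/G$ of at most the same cost. For continuity of the inverse one invokes the standard identification, in our proper isometric setting, of the $1$-Wasserstein distance on $X/G$ with the quotient of the $1$-Wasserstein distance on $X$, combined with the canonical minimizing lifts produced in the surjectivity step, to obtain a continuous section of $\pi$ and hence a two-sided continuous inverse. Apart from the diameter-matching claim driving injectivity, the remaining steps --- descent to the quotient, Wasserstein estimates, and the construction of the section --- are all direct checks.
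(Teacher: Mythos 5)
Your overall architecture is the same as the paper's: the induced map $\tilde{h}\colon\vr{X}{r}/G\to\vr{X/G}{r}$, surjectivity from the existence clause of the $t$-diameter condition, injectivity from the uniqueness clause, and a separate continuity check for the thickenings. Your surjectivity argument and your treatment of the Wasserstein side are fine (and in fact more detailed than the paper, which simply asserts that $h$ and its inverse are continuous). The issue is the step you yourself flag as the main obstacle: the ``auxiliary diameter-matching claim'' that every simplex of $\vr{X}{r}$ already realizes the quotient diameter of its image. You propose to derive this from the uniqueness clause of Definition~\ref{def:r-diameter} together with the $t$-distance property, and that derivation cannot work. The uniqueness clause, read literally, only constrains those lifts whose $X$-diameter \emph{equals} the quotient diameter; it says nothing about a lift whose diameter is strictly larger than the quotient diameter but still below $r$. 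One can build finite metric spaces with a free isometric $\Z/2\Z$-action (four points $x,gx,y,gy$ with $d(x,gy)<d(x,y)$ and all quotient diameters tiny) that satisfy the literal existence-and-uniqueness clause for every $k$ and every $r$, yet in which $\{x,y\}$ is a non-minimizing simplex of $\vr{X}{r}$; there $\tilde{h}$ fails to be injective on geometric realizations. So no argument from the stated axioms alone can close this gap.

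To be fair, the paper's own proof elides exactly the same point: it asserts that for each simplex $\sigma$ of $\vr{X/G}{r}$ there is a \emph{unique} simplex of $\vr{X}{r}$ containing $x_0$ mapping to $\sigma$, which tacitly reads the uniqueness clause as ``the minimizing lift is the only lift of diameter less than $r$.'' That stronger property is what is actually verified in the application to spheres: Lemma~\ref{lem:diam} shows that \emph{every} choice of signs other than the coherent ones produces diameter at least $\tfrac{1}{4}$, which is precisely your diameter-matching claim for $S^n$. The fix for your write-up is therefore not a cleverer deduction but a change of input: either adopt the stronger reading of Definition~\ref{def:r-diameter} (uniqueness among all lifts of diameter less than $r$, with the unique one realizing the quotient diameter) as the hypothesis of the proposition, or state the diameter-matching property as an explicit additional assumption, and then your injectivity argument (reduce to $h_0=e$ via freeness/$t$-distance, then invoke uniqueness) goes through essentially as the paper intends.
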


\begin{proof}
We first consider the case of the Vietoris--Rips simplicial complexes.
We can handle the first two bullet points simultaneously simply because a simplex in either complex has diameter less than $t$.

Consider the simplicial map $h\colon\vr{X}{r}\to\vr{X/G}{r}$ defined by $h(x)=[x]$; on geometric realizations this is defined via $h(\sum \lambda_i x_i)=\sum\lambda_i[x_i]$.
This map is well-defined since $G$ acts isometrically.
Note that if two points in the geometric realization of $\vr{X}{r}$ are in the same orbit of the $G$ action on the geometric realization, then they have the same image under $h$.
It follows that $h$ induces a map $\tilde{h}\colon\vr{X}{r}/G\to\vr{X/G}{r}$.
We will show that $\tilde{h}$ is a homeomorphism.

We need to show the following two facts.
\begin{enumerate}
\item Map $\tilde{h}$ is surjective.
\item Map $\tilde{h}$ is injective.
\end{enumerate}

For (1), note that $\tilde{h}$ is surjective if $h$ is surjective.
The map $h$ is surjective because, given any simplex $\sigma=\{[x_0],\ldots,[x_k]\}\in\vr{X/G}{r}$, by the definition of an $r$-diameter action, there exists a simplex $\sigma'=\{x_0,g_1\cdot x_1\ldots,g_k\cdot x_k\}\in\vr{X}{r}$ with $h(\sigma')=\sigma$.

For (2), we would like to consider any two points $[\sum \lambda_i x_i],[\sum \lambda'_j x'_j]\in \vr{X}{r}/G$ with $\tilde{h}([\sum \lambda_i x_i]) = \tilde{h}([\sum \lambda'_j x'_j])$.
This means that $h(\sum \lambda_i x_i)=h(\sum \lambda'_j x'_j)$, i.e., that $\sum\lambda_i[x_i]=\sum\lambda'_j[x'_j]$.
It suffices to show that there is some $g\in G$ with $g\cdot \sum \lambda_i x_i = \sum \lambda'_j x'_j$.
This follows from the ``uniqueness'' part of the definition of an $r$-diameter action.
Indeed, given any simplex $\sigma=\{[x_0],\ldots,[x_k]\}\in\vr{X/G}{r}$, there exists a unique simplex $\tilde{\sigma}=\{x_0,g_1\cdot x_1\ldots,g_k\cdot x_k\}\in\vr{X}{r}$ containing $x_0$ with $h(\sigma')=\sigma$ and hence a unique simplex $\sigma''\in\vr{X}{r}/G$ with $\tilde{h}(\sigma'')=\sigma$.

For the case of Vietoris--Rips metric thickenings, we consider the analogous map $h\colon\vrm{X}{r}\to\vrm{X/G}{r}$ defined by $h(\sum \lambda_i x_i)=\sum\lambda_i[x_i]$; this map is well-defined since $G$ acts isometrically.
The only additional observation to make in this case is that both $h$ and its inverse are continuous.
\end{proof}

The \v{C}ech case is similar.

\begin{proposition}\label{prop:group-action-cech}
Let $G$ be a group acting properly and by isometries on a metric space $X$.
If the action is a $t$-nerve action, then 
\begin{itemize}
    \item $\cechless{X/G}{r}$ is isomorphic to $\cechless{X}{r}/G$ for all $r\le t$,
    \item $\cechleq{X/G}{r}$ is isomorphic to $\cechleq{X}{r}/G$ for all $r<t$,
    \item $\cechmless{X/G}{r}$ is homeomorphic to $\cechmless{X}{r}/G$ for all $r\le t$, and
    \item $\cechmleq{X/G}{r}$ is homeomorphic to $\cechmleq{X}{r}/G$ for all $r<t$.
\end{itemize}
\end{proposition}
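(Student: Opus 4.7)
The plan is to imitate the proof of Proposition~\ref{prop:group-action} step by step, with the $t$-nerve hypothesis replacing the $t$-diameter hypothesis and the common-ball definition of a \v{C}ech simplex replacing the diameter definition of a Vietoris--Rips simplex. All four bullet points are handled uniformly; the open-ball versus closed-ball distinction only enters when transferring the nerve hypothesis from scale $t$ down to scale $r\le t$ (respectively $r<t$).

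I would begin by defining $h\colon\cech{X}{r}\to\cech{X/G}{r}$ by $h(\sum\lambda_i x_i)=\sum\lambda_i[x_i]$. This is well defined: if $y\in\cap_{i=0}^k B_X(x_i,r)$ witnesses the simplex $\{x_0,\ldots,x_k\}$, then because the quotient map $X\to X/G$ is distance non-increasing we have $[y]\in\cap_{i=0}^k B_{X/G}([x_i],r)$, so $\{[x_0],\ldots,[x_k]\}$ is a simplex of $\cech{X/G}{r}$. Since $G$ acts by isometries, $h$ is constant on $G$-orbits and descends to $\tilde{h}\colon\cech{X}{r}/G\to\cech{X/G}{r}$. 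Surjectivity of $\tilde{h}$ is the existence clause of Definition~\ref{def:r-nerve}: any simplex $\{[x_0],\ldots,[x_k]\}\in\cech{X/G}{r}$ lifts to a simplex $\{x_0,g_1\cdot x_1,\ldots,g_k\cdot x_k\}\in\cech{X}{r}$ after fixing $g_0=e$. Injectivity of $\tilde{h}$ is the uniqueness clause: if $h(\sum\lambda_i x_i)=h(\sum\lambda'_j x'_j)$, one collects like terms so that both sums are indexed by the same simplex of $\cech{X/G}{r}$; after translating by a single $g\in G$ to match the first representatives, the uniqueness of the remaining $g_i$'s forces the two lifts to coincide, showing the two original points lie in one $G$-orbit. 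The inclusions $B(x,r)\subseteq B(x,t)$ for $r\le t$ (open case) and $B_\le(x,r)\subseteq B(x,t)$ for $r<t$ (closed case) allow me to invoke the $t$-nerve hypothesis at the relevant scales: a nonempty intersection at scale $r$ remains nonempty at scale $t$, and any failure of uniqueness at scale $r$ would yield a failure at scale $t$, so both existence and uniqueness transfer downward as needed.

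For the metric thickening bullets, the same map $h$ sends $\cechm{X}{r}$ into $\cechm{X/G}{r}$ and descends to a bijection $\tilde{h}$. Continuity of $h$ is immediate because $X\to X/G$ is $1$-Lipschitz, which induces a $1$-Lipschitz pushforward on $1$-Wasserstein space. The main obstacle I anticipate is continuity of $\tilde{h}^{-1}$: given a convergent sequence in $\cechm{X/G}{r}$, one must choose lifts in $\cechm{X}{r}/G$ that converge to a lift of the limit. I expect this to follow from a local argument using the uniqueness half of the $t$-nerve hypothesis---once a basepoint representative is fixed in a small Wasserstein neighborhood of the limit, the remaining group elements are uniquely determined and stable under small perturbations---paralleling the unelaborated remark at the end of the proof of Proposition~\ref{prop:group-action}.
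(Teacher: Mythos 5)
Your proposal follows essentially the same route as the paper's proof: the same map $h\bigl(\sum\lambda_i x_i\bigr)=\sum\lambda_i[x_i]$, descent to $\tilde{h}$ on the $G$-quotient, surjectivity from the existence clause and injectivity from the uniqueness clause of the $t$-nerve definition, and continuity of $h$ and its inverse for the metric-thickening bullets. You are in fact somewhat more explicit than the paper about the open-ball versus closed-ball scale transfer from $t$ down to $r$ and about why $\tilde{h}^{-1}$ is continuous, both of which the paper leaves as brief unelaborated remarks.
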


\begin{proof}
We first consider the case of the \v{C}ech simplicial complexes.
Consider the simplicial map $h\colon\cech{X}{r}\to\cech{X/G}{r}$ defined by $h(x)=[x]$; on geometric realizations this is defined via $h(\sum \lambda_i x_i)=\sum\lambda_i[x_i]$.
This map is well-defined since $G$ acts isometrically.
Note that if two points in the geometric realization of $\cech{X}{r}$ are in the same orbit of the $G$ action, then they have the same image under $h$.
It follows that $h$ induces a map $\tilde{h}\colon\cech{X}{r}/G\to\cech{X/G}{r}$.
We will show that $\tilde{h}$ is a homeomorphism.

Again, we need to show the following two facts.
\begin{enumerate}
\item Map $\tilde{h}$ is surjective.
\item Map $\tilde{h}$ is injective.
\end{enumerate}

For (1), note that $\tilde{h}$ is surjective if $h$ is surjective.
The map $h$ is surjective because, given any simplex $\sigma=\{[x_0],\ldots,[x_k]\}\in\cech{X/G}{r}$, by the definition of an $r$-nerve action, there exists a simplex $\sigma'=\{x_0,g_1\cdot x_1\ldots,g_k\cdot x_k\}\in\cech{X}{r}$ with $h(\sigma')=\sigma$.

For (2), we would like to consider any two points $[\sum \lambda_i x_i],[\sum \lambda'_j x'_j]\in \cech{X}{r}/G$ with $\tilde{h}([\sum \lambda_i x_i]) = \tilde{h}([\sum \lambda'_j x'_j])$.
This means that $h(\sum \lambda_i x_i)=h(\sum \lambda'_j x'_j)$, i.e., that $\sum\lambda_i[x_i]=\sum\lambda'_j[x'_j]$.
It suffices to show that there is some $g\in G$ with $g\cdot \sum \lambda_i x_i = \sum \lambda'_j x'_j$.
This follows from the ``uniqueness'' part of the definition of an $r$-nerve action.
Indeed, given any simplex $\sigma=\{[x_0],\ldots,[x_k]\}\in\cech{X/G}{r}$, there exists a unique simplex $\tilde{\sigma}=\{x_0,g_1\cdot x_1\ldots,g_k\cdot x_k\}\in\cech{X}{r}$ containing $x_0$ with $h(\sigma')=\sigma$ and hence a unique simplex $\sigma''\in\cech{X}{r}/G$ with $\tilde{h}(\sigma'')=\sigma$.

For the case of \v{C}ech metric thickenings, it suffices to observe that the map $h$ and its inverse are also continuous as maps on the \v{C}ech thickenings.
\end{proof}

\subsection{Examples}\label{ssec:examples}

We now look at some examples of groups acting isometrically on topological spaces.
Although we don't often know the precise homotopy types of Vietoris--Rips complexes of arbitrary spaces, we can sometimes address the relationship between $\vr{X/G}{r}$ and the quotient of $\vr{X}{r}$ under the group action.

\begin{example}
\label{ex:VR-ZactR}
Let $G=\Z$ act on $X=\R$ by translation.
From Example~\ref{ex:ZactR} we know this is an $r$-diameter action for $r\le\frac{1}{3}$.
Hence if $S^1=\R/\Z$ is the circle of unit circumference, then Proposition~\ref{prop:group-action} implies that for $r<\frac{1}{3}$, the complex $\vr{S^1}{r}=\vr{\R/\Z}{r}$ is isomorphic to $\vr{\R}{r}/\Z$, which is homotopy equivalent to $S^1$.
By contrast, the action of $\Z$ on $\R$ is not an $r$-diameter action for any $r>\frac{1}{3}$, and also $\vr{S^1}{r}$ is not homotopy equivalent to $S^1$ for any $r>\frac{1}{3}$ by~\cite{AA-VRS1}.
Hence this example shows the tightness of the bounds on $r$ in Proposition~\ref{prop:group-action}.
\end{example}

\begin{example}\label{ex:12-circles}
Suppose $X$ consists of several connected components which are all isometric.
Let $G$ act by isometries on $X$, and suppose furthermore that $G$ acts freely on the connected components of $X$, meaning that if $x, g\cdot x \in X$ are in the same connected component, then $g$ is the identity element.
Suppose no two distinct connected components are within distance $t$ of each other.
It follows that $G$ is a $t$-diameter action on $X$.
Therefore, Proposition~\ref{prop:group-action} implies that for $r<t$, the complex $\vr{X}{r}$ is isomorphic to the disjoint union $\coprod^{|G|}\vr{X/G}{r}$.

For example, let $Y$ be a circle in $\R^3$ with center at $(\frac{5}{8},\frac{3}{8},-\frac{\sqrt{2}}{{8}})$, lying in the plane with normal vector $(1,1,0)$, and with radius $\frac{1}{5}$
using the Euclidean metric.
The action of $G = A_4$, as the group of rotational symmetries of a particular regular tetrahedron centered about the origin (with vertex coordinates $(\pm \frac{1}{2}, 0, -\frac{\sqrt{2}}{4})$, $(0, \pm \frac{1}{2}, \frac{\sqrt{2}}{4})$), extends to $\R^3$.
The orbit of the circle, $Y$, consists of $12$ copies of $Y$; see Figure~\ref{fig:12-circles}.
We let $X$ denote the union of these $12$ copies, so $X/G=Y$.
We obtain that $\vr{X}{r}$ is isomorphic to the disjoint union of $12$ copies of $\vr{X/G}{r}$ for all $r$ smaller than the distance between connected components in $X$.
\end{example}

\begin{figure}[htb]
\centering
\includegraphics[width=0.55\textwidth]{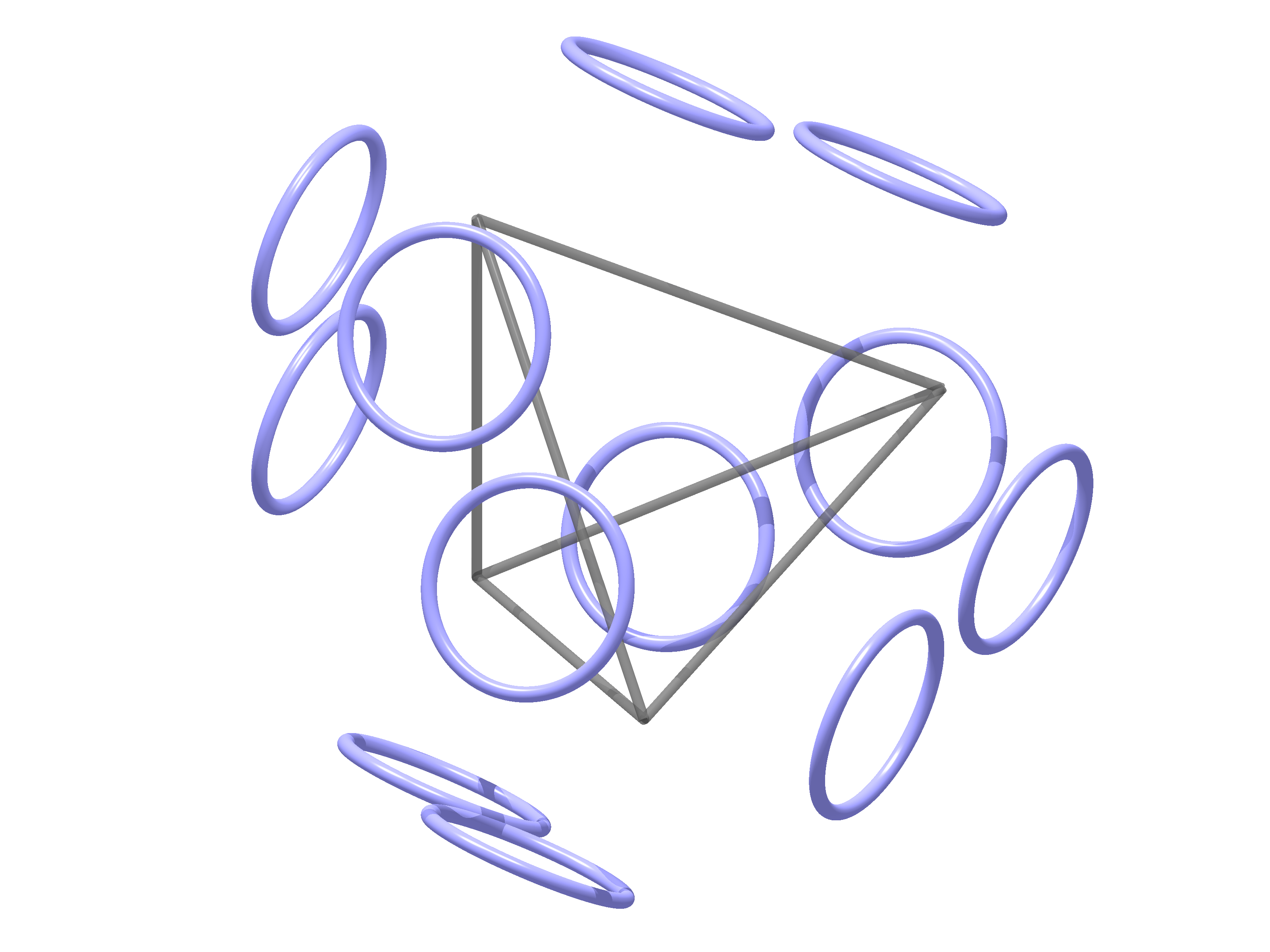}
\caption{In Example~\ref{ex:12-circles}, the space $X$ is a set of $12$ circles in $\R^3$.}
\label{fig:12-circles}
\end{figure}

In some cases we can conjecture the full homotopy type over all scale parameters $r$, as the next example shows.

\begin{example}
Take the unit circle with center $(4, 0)$ in $\R^2$.
If we rotate this circle about the origin under the action of $G = \Z / 6\Z$, i.e.\ by rotating $\R^2$ about the origin in multiples of $60$ degrees, we obtain $6$ unit circles (with centers $(\pm 4,0)$, $(\pm 2, 2\sqrt{3})$, $(\pm 2, -2\sqrt{3})$).
We let $X$ denote the union of the $6$ circles.
The closest distance between two adjacent circles in $X$ is 2.
By Example~\ref{ex:12-circles} and Proposition~\ref{prop:group-action}, for $r<2$ we have an isomorphism $\vr{X}{r}\cong\coprod^{6}\vr{X/G}{r}$. 
The homotopy types of the Vietoris--Rips complexes of the unit circle $\vr{X/G}{r}$ are known for all $r$~\cite{AA-VRS1}.
We obtain $\vrless{X}{r}\cong\coprod^{6}S^{2k+1}$ for all $0<r<2$, where the integer $k$ is monotonically nondecreasing with $r$.

For larger scale parameters $r>2$, we can form conjectures by noting that the Vietoris--Rips complex of each individual circle is contractible, but that we have six unit circles that are evenly-spaced around a larger circle of radius 4.
Think of each of the six circles, momentarily, as a single point, giving six evenly-spaced points.
Vietoris--Rips complexes of evenly-spaced points on the circle have been studied in~\cite{Adamaszek2013,AAFPP-J}.
In particular, the Vietoris--Rips complex of six-evenly spaced points on the circle, as the scale increases, obtains the homotopy types of six disjoint points, the circle $S^1$, the two-sphere $S^2$, and finally the contractible space.
This knowledge allows us to conjecture the homotopy type of $\vr{X}{r}$ at $r > 2$ when the six circles join up but individually are contractible.
Indeed, we conjecture that the successive homotopy types of $\vrless{X}{r}$ are the following, starting from $r=0$ and going up to the diameter of the whole space: $\coprod^6 S^1$, $\coprod^6 S^3$, $\coprod^6 S^5$, $\coprod^6 S^7$, $\coprod^6 S^9$, \ldots for $0<r<2$ (this part is proven), $S^1$ for $2<r<4\sqrt{3}-2$, $S^2$ for $4\sqrt{3}-2<r<6$, and finally the contractible space for $r>6$.
\end{example}

\begin{example}
Take the torus $X$ with the flat metric (i.e., $X$ is a quotient of $\R^2$) under the action of $G\cong\Z/14\Z\cong\Z/2\Z\times\Z/7\Z$, defined as follows.
Take the torus to be $[0,2\pi]\times[0,2\pi]$ with the top and bottom (respectively, left and right) edges identified.
Then, identify the points $[x, y]$ with $[x, y + \frac{2\pi}{7}]$ and $[x + \pi, y]$.
The quotient space is a torus with different distances for traveling along geodesics around the short loop and long loop in this torus.

The action of $G$ is a $\frac{2\pi}{21}$-diameter action, but not a $(\frac{2\pi}{21} + \varepsilon)$-diameter action for any $\varepsilon>0$.
If we take the points $[0, 0]$, $[0, \frac{2\pi}{21}]$, and $[0, \frac{4\pi}{21}]$, then we note that the diameter of these points in the quotient metric is $\frac{2\pi}{21}$, but there is no choice of elements in $G$ so that the diameter of the corresponding lifted points in $X$ is $\frac{2\pi}{21}$.
However, if an arbitrary number of points are selected next to an arbitrary point $[x_0]\in X/G$ that are of diameter less than $\frac{2\pi}{21}$, then there exists a unique choice of elements in $G$ so that $\diam_X\{x_0,g_1\cdot x_1\ldots,g_k\cdot x_k\}=\diam_{X/G}\{[x_0],\ldots,[x_k]\}$.
This follows since, when moving along a geodesic in $X/G$, it requires at least $\frac{2\pi}{7}$ in path length to return back to the initial starting point.

We deduce from Proposition~\ref{prop:group-action} that $\vr{X/G}{r} = \vr{X}{r}/G$ for $r < \frac{2\pi}{21}$, though we do not know what the homotopy types of these Vietoris--Rips complexes of tori are.\footnote{Homotopy types of Vietoris--Rips complexes of tori with the $L^\infty$ or supremum metric are fully understood by Proposition~10.2 of~\cite{AA-VRS1}.}

A related group action on the torus $X$, i.e.\ the square $[0,2\pi]\times[0,2\pi]$ with sides identified, is by the dihedral group $D_7$ with fourteen elements.
Put 7 equally spaced points of the form $(\frac{2\pi k}{7},0)$ along the bottom edge of this square, and 7 equally spaced but ``offset" points of the form $(\frac{2\pi k+\pi}{7},\pi)$ along the line $y=\pi$ in this square.
The dihedral group of order 14 will act by translations and glide reflections, permuting these 14 points.
We can similarly obtain some information relating the Vietoris--Rips complexes of the quotient space $X/G$ to the quotient of the Vietoris--Rips complex of the original space $X$, for $r$ small.
\end{example}


\begin{example}
Consider the 22-holed torus $X$ depicted in Figure~\ref{fig:torus} in $\R^3,$ with the Euclidean submetric.
Equip $X$ with the action of $G=\Z/7\Z$ as a group of rotations by $2\pi/7$.
The quotient space $X/G$ is obtained from a single severed arm of the torus after identifying two boundary circles: this is a 4-holed torus with an asymmetric metric.
For scale $r$ small, the Vietoris--Rips complex of the $X$, after quotienting this complex by $G$, is isomorphic to the Vietoris--Rips complex of the quotient 4-holed torus $X/G$.
Hatcher notes that these types of actions of $\Z/m\Z$ on an $(mn + 1)$-holed torus are the only covering space actions on this torus (Example~1.41 of~\cite{Hatcher}).
\end{example}

\begin{figure}[htb]
\centering
\includegraphics[width=0.55\textwidth]{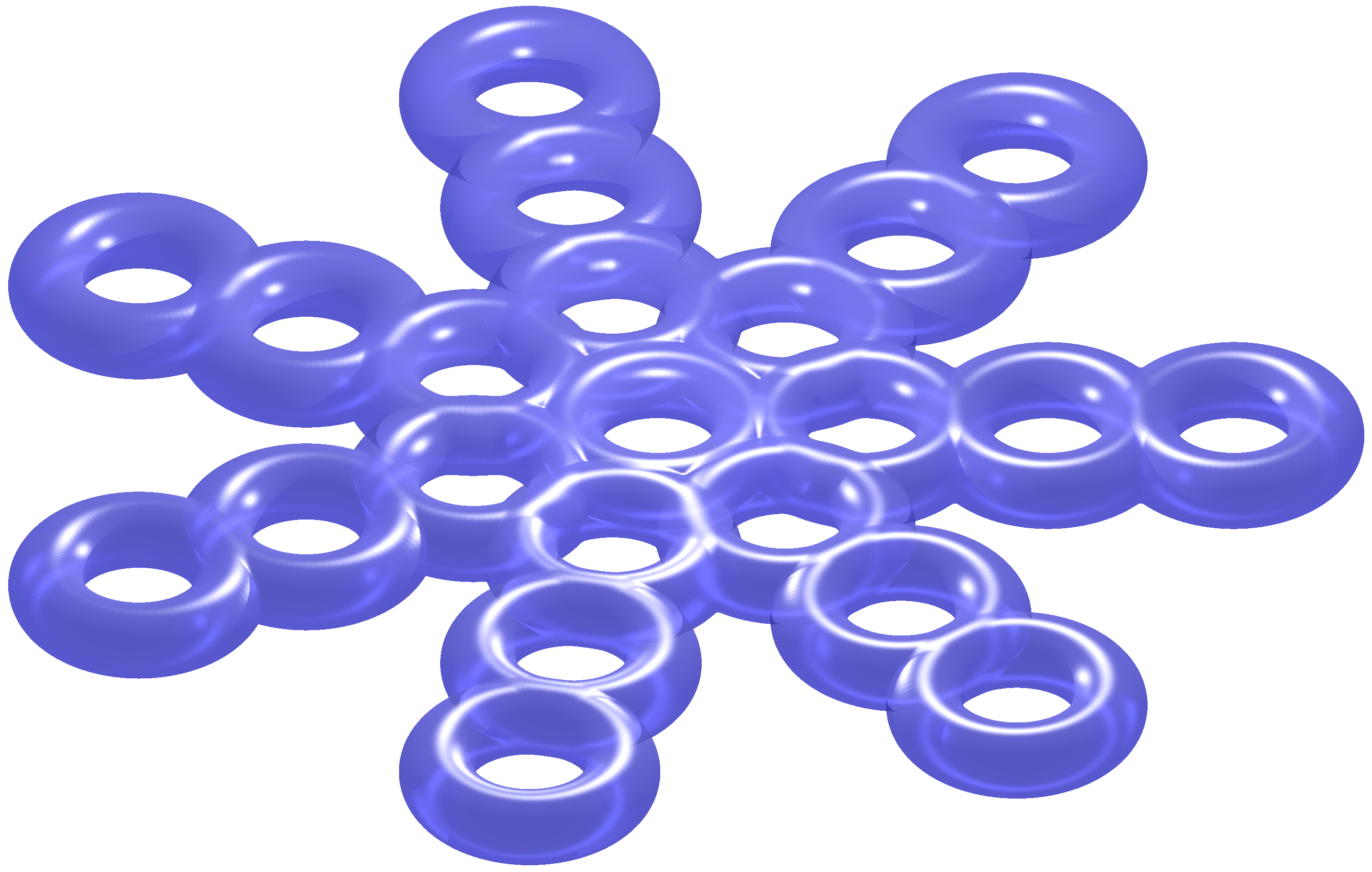}
\caption{A 22-holed torus in $\R^3$.}
\label{fig:torus}
\end{figure}

\begin{example}
Section~5 of the paper~\cite{Adamaszek2020} considers a space $X$ that is an ``infinite ladder'' with a countable number of rungs that is equipped with an action by the group of integers $G=\Z$ which is generated by translating by $n$ rungs.
The quotient space of this action $X/G$ is a ``circular ladder'' with $n$ rungs.
The paper~\cite{Adamaszek2020} uses our Proposition~\ref{prop:group-action} in order to understand the homotopy type of the Vietoris--Rips complex of the circular ladder $X/G$ in terms of the (known) Vietoris--Rips complex of the infinite ladder $X$.
\end{example}

In the following two sections we treat, with a fair amount of detail, the example where $X$ is the $n$-sphere, equipped with its antipodal action, and hence $X/G$ is real projective $n$-space.

\section{Vietoris--Rips thickenings of projective spaces at small scales}\label{sec:small}

We study Vietoris--Rips thickenings of projective spaces at small scales in this section before proceeding to large scales in the following section.

\subsection{Notation}

We first describe our notation for spheres and projective spaces.

\subsection*{Spheres}
The $n$-sphere $S^n$ is the set of points at distance one from the origin in Euclidean space:
\[S^n = \left\lbrace (x_1, x_2, x_3, \ldots, x_{n+1})\in\R^{n+1}~|~\sum_i x_i^2 = 1 \right\rbrace.\]
A metric on $S^n$ may be defined by either retaining the Euclidean metric by viewing $S^n$ as a subspace of $\mathbb{R}^{n+1}$, or by looking at the geodesic arc length of walking along $S^n$ as a surface.
For convenience, we will equip $S^n$ with the geodesic metric where the circumference of any great circle is one.

\subsection*{Real projective space}

The projective space $\RP^n$ is the quotient space
\[\RP^n = S^n / \sim\]
where the $\sim$ relation maps vectors to other vectors through multiplication by $\pm 1$.
We often denote a point $\pm x\in \RP^n$ with the notation $[x]$.
We will equip $\RP^n$ with the quotient metric induced from the geodesic metric on $S^n$.
That is, we have
\[d_{\RP^n}([x],[x'])=\min \{d_{S^n}(x,x'),d_{S^n}(x,-x')\}.\]
This is a specific case of the metric on a quotient spaces defined in Eq.~\eqref{eq:quotient-metric}.
Since each great circle in $S^n$ has circumference one, the ``great circles" in $\RP^n$ have circumference $\frac{1}{2}$.
One could instead use the Euclidean metric on $S^n$ to get a different metric on $\RP^n$; our results also apply to this case with only minor changes to the relevant scale parameters.

\subsection{Complexes and thickenings}

For $r<\frac{1}{6}$, we will show that $\vrm{\RP^n}{r}\simeq \RP^n$.
We do this by noting that $\RP^n=S^n/(x\sim-x)$ is the quotient of $S^n$ under the action of $G = (\lbrace \pm 1\rbrace, \times)\cong\Z/2\Z$.
The following two lemmas imply in Corollary~\ref{sphere-diam} that this action is a $\frac{1}{6}$-diameter action (though the constant in the first lemma is slightly better).

We note that $\frac{1}{6}$ is twice the filling radius of this projective space (with diameter $\frac{1}{4}$)~\cite{katz1983filling}, and hence the homotopy equivalence $\vrless{\RP^n}{r}\simeq \RP^n$ for $r<\frac{1}{6}$ given by Theorem~\ref{thm:small-scales} is also closely related to the recent preprint~\cite{lim2020vietoris}.
This perspective does not provide the first new homotopy type given by Theorem~\ref{thm:large-scales} in Section~\ref{sec:large}, to our knowledge.

\begin{lemma}\label{lem:diam}
Suppose $x_0,\ldots,x_k\in S^n$ with $\diam\{x_0,\ldots,x_k\}<\frac{1}{4}$ and let $g_i \in \lbrace \pm 1\rbrace$ for all $0\le i\le k$.
Then $\diam\{g_0x_0,\ldots, g_kx_k\}<\frac{1}{4}$ if and only if all signs $g_i$ are chosen to be positive or all signs are chosen to be negative.
\end{lemma}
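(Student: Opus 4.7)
The plan is to reduce everything to the identity $d_{S^n}(x,-y) = \tfrac{1}{2} - d_{S^n}(x,y)$ for all $x,y \in S^n$. This holds because $-y$ sits on the unique great circle through $x$ and $y$ (when $x \ne \pm y$), and since that circle has length one, traveling the short arc from $x$ to $y$ and then continuing another $\tfrac{1}{2}$ reaches $-y$; consequently the shorter route from $x$ to $-y$ has length $\tfrac{1}{2} - d_{S^n}(x,y)$. The degenerate cases $y = \pm x$ are immediate. I will also use the fact that the antipodal map $x \mapsto -x$ is an isometry of $S^n$, being the restriction of a Euclidean isometry of $\R^{n+1}$.

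For the ``if'' direction, when all $g_i$ agree, the set $\{g_0 x_0, \ldots, g_k x_k\}$ is either $\{x_0, \ldots, x_k\}$ itself or its antipodal image; by the isometry observation its diameter coincides with $\diam\{x_0, \ldots, x_k\} < \tfrac{1}{4}$.

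For the ``only if'' direction I argue the contrapositive. Suppose the $g_i$ are not all equal, and choose indices $i,j$ with $g_i \ne g_j$. Then the two-point set $\{g_i x_i, g_j x_j\}$ equals either $\{x_i, -x_j\}$ or its antipodal image $\{-x_i, x_j\}$, so in either case $d_{S^n}(g_i x_i, g_j x_j) = d_{S^n}(x_i, -x_j) = \tfrac{1}{2} - d_{S^n}(x_i, x_j) > \tfrac{1}{2} - \tfrac{1}{4} = \tfrac{1}{4}$, where the second equality is the key identity and the inequality uses $d_{S^n}(x_i, x_j) \le \diam\{x_0,\ldots,x_k\} < \tfrac{1}{4}$. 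Thus $\diam\{g_0 x_0, \ldots, g_k x_k\} > \tfrac{1}{4}$, contradicting the bound.

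No step presents a genuine obstacle; the only care required is in cleanly establishing the identity $d_{S^n}(x,-y) = \tfrac{1}{2} - d_{S^n}(x,y)$, which depends on the chosen normalization that every great circle of $S^n$ has unit circumference, and in noting that the antipodal map is an isometry so the two cases $\{x_i,-x_j\}$ and $\{-x_i,x_j\}$ yield the same pairwise distance.
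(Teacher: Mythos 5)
Your proof is correct and follows essentially the same route as the paper's: both reduce to pairs and use the identity $d_{S^n}(x,-y)=\tfrac{1}{2}-d_{S^n}(x,y)$ to show that a mixed pair of signs forces a distance exceeding $\tfrac{1}{4}$, while equal signs preserve the diameter. The only difference is that you spell out the justification of that identity and of the antipodal map being an isometry, which the paper takes as given.
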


\begin{proof}
Suppose $d(x_i,x_j)<\frac{1}{4}$.
Then we compute
\begin{itemize}
\item $d(-x_i,-x_j)=d(x_i,x_j)<\frac{1}{4}$,
\item $d(x_i,-x_j)=\frac{1}{2}-d(x_i,x_j)>\frac{1}{4}$, and
\item $d(-x_i,x_j)=\frac{1}{2}-d(x_i,x_j)>\frac{1}{4}$.
\end{itemize}
This means that $\diam\{g_i x_i, g_j x_j\}<\frac{1}{4}$ if and only if the signs $g_i$ and $g_j$ are both positive or both negative, from which the claim follows.
\end{proof}

\begin{lemma}\label{lem:diam2}
If $\diam_{\RP^n}\{[x_0],\ldots,[x_k]\}<\frac{1}{6}$, then there is a choice of signs $g_i \in \lbrace \pm 1\rbrace$ for $1\le i\le k$ such that
\[\diam_{S^n}\{x_0,g_1x_1\ldots,g_kx_k\}<\tfrac{1}{6}.\]
\end{lemma}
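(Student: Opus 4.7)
The plan is to lift each $[x_i]$ to a suitable representative in $S^n$ by fixing the representative of $[x_0]$ to be $x_0$ itself (so $g_0 = 1$) and then, for each $i \geq 1$, choosing $g_i \in \{\pm 1\}$ to be the unique sign realizing the quotient distance from $[x_0]$, that is, $d_{S^n}(x_0, g_i x_i) = d_{\RP^n}([x_0], [x_i]) < \tfrac{1}{6}$. The uniqueness of this choice follows because if both $d(x_0, x_i)$ and $d(x_0, -x_i)$ were less than $\tfrac{1}{6}$, then by the triangle inequality we would have $\tfrac{1}{2} = d(x_i, -x_i) \leq d(x_0, x_i) + d(x_0, -x_i) < \tfrac{1}{3}$, a contradiction. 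So the lifts $x_0, g_1 x_1, \ldots, g_k x_k$ are well-defined.

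Next I would verify that this choice of signs controls the pairwise distances between the lifts for indices $i, j \geq 1$. Combining the two bounds $d(x_0, g_i x_i) < \tfrac{1}{6}$ and $d(x_0, g_j x_j) < \tfrac{1}{6}$ via the triangle inequality immediately yields $d(g_i x_i, g_j x_j) < \tfrac{1}{3}$. This is not yet the bound we need, but it gives enough room to rule out the opposite-sign alternative.

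The key step is then to show that among the two candidate distances $d(g_i x_i, g_j x_j)$ and $d(g_i x_i, -g_j x_j)$ representing $d_{\RP^n}([x_i], [x_j])$, the first one is the smaller and is less than $\tfrac{1}{6}$. Suppose for contradiction that $d(g_i x_i, -g_j x_j) < \tfrac{1}{6}$. Then, applying the triangle inequality to the antipodal pair $g_j x_j, -g_j x_j$, one gets
\[
\tfrac{1}{2} = d(g_j x_j, -g_j x_j) \leq d(g_j x_j, g_i x_i) + d(g_i x_i, -g_j x_j) < \tfrac{1}{3} + \tfrac{1}{6} = \tfrac{1}{2},
\]
a contradiction. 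Hence $d(g_i x_i, -g_j x_j) \geq \tfrac{1}{6}$, and since $d_{\RP^n}([x_i], [x_j]) < \tfrac{1}{6}$ by hypothesis, the minimum must be attained by the same-sign lifts, giving $d(g_i x_i, g_j x_j) < \tfrac{1}{6}$, as desired.

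The main obstacle I would expect is not any single step but rather the observation that anchoring all the lifts to $x_0$ through the triangle inequality automatically makes the coarse bound $\tfrac{1}{3}$ available, which is then precisely what the antipodal-pair argument needs in order to exclude the wrong sign. Once the factor of three hidden in $\tfrac{1}{6} = \tfrac{1}{2} \cdot \tfrac{1}{3}$ is spotted, the estimates close up; the statement would fail if we tried to push the threshold any higher, which is consistent with Example~\ref{ex:ZactR} showing tightness for the analogous circle case.
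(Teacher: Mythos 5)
Your proof is correct and follows essentially the same route as the paper: anchor the sign choices so each lift lies within $\tfrac{1}{6}$ of $x_0$, then rule out the opposite-sign alternative by a three-term triangle inequality that would force an antipodal pair to be closer than $\tfrac{1}{2}$. Your chain through the antipodal pair $g_jx_j,-g_jx_j$ is, after unpacking the coarse $\tfrac{1}{3}$ bound, term-for-term the same as the paper's chain through $x_0,-x_0$.
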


\begin{proof}
Let $1\le i\le k$.
By hypothesis, we have $d_{\RP^n}([x_0],[x_i])<\frac{1}{6}$, and, hence, we can pick a sign $g_i$ such that $d_{S^n}(x_0,g_ix_i)<\frac{1}{6}$.
For the remainder of this proof, we let $x_i^*$ denote $g_ix_i$.

We have chosen signs such that $d_{S^n}(x_0,x_i^*)<\frac{1}{6}$ for all $i$; it remains to show that $d_{S^n}(x_i^*,x_j^*)<\frac{1}{6}$ for all $1\le i,j\le k$.
If not, then since $d_{\RP^n}([x_i],[x_j])<\frac{1}{6}$, necessarily we would have that $d_{S^n}(x_i^*,-x_j^*)<\frac{1}{6}$.
However, since $d_{S^n}(-x_i^*,-x_0)=d_{S^n}(x_j^*,x_0)<\frac{1}{6}$, this would give the contradiction
\[\tfrac{1}{2}=d_{S^n}(x_0,-x_0)\le d_{S^n}(x_0,x_i^*) + d_{S^n}(x_i^*,-x_j^*) + d_{S^n}(-x_j^*,-x_0) < 3\cdot\tfrac{1}{6}=\tfrac{1}{2},\]
from the triangle inequality.
Hence, it must be the case that $d_{S^n}(x_i^*,x_j^*)<\frac{1}{6}$ for all $1\le i,j\le k$, and, therefore, the action of $G$ on $S^n$ is a $\frac{1}{6}$-diameter action.
\end{proof}

Qualitatively, we see from Lemma~\ref{lem:diam2} that a cluster of sufficiently close points in $\RP^n$ has as its preimage ``two clusters" of sufficiently close points in $S^n$.
The above two lemmas combine together to give the following.

\begin{corollary}\label{sphere-diam}
The action of $G = (\lbrace \pm 1\rbrace, \times) \cong \Z/2\Z$ on $S^n$ for $n\ge 1$ is an $r$-diameter action for $r<\frac{1}{6}$, and this bound is tight.
\end{corollary}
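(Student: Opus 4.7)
The plan is to deduce Corollary~\ref{sphere-diam} directly from Lemmas~\ref{lem:diam} and~\ref{lem:diam2}, and then verify tightness with a small configuration on a single great circle. There is no substantial obstacle; the main bookkeeping step is to upgrade the strict-inequality conclusion of Lemma~\ref{lem:diam2}, namely $\diam_{S^n}<\frac{1}{6}$, to the equality $\diam_{S^n}=\diam_{\RP^n}$ demanded by Definition~\ref{def:r-diameter}.

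For existence, I would start with $\{[x_0],\ldots,[x_k]\}\subset\RP^n$ of quotient diameter less than $\frac{1}{6}$ and apply Lemma~\ref{lem:diam2} to get signs $g_1,\ldots,g_k$ (setting $g_0=+1$) with $\diam_{S^n}\{x_0,g_1 x_1,\ldots,g_k x_k\}<\frac{1}{6}$. For every pair of indices $i,j$, the quotient distance equals $\min\bigl(d_{S^n}(g_i x_i,g_j x_j),\,d_{S^n}(g_i x_i,-g_j x_j)\bigr)$, and the second term equals $\frac{1}{2}-d_{S^n}(g_i x_i,g_j x_j)>\frac{1}{4}$ by the computation used in Lemma~\ref{lem:diam}. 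Hence $d_{S^n}(g_i x_i,g_j x_j)=d_{\RP^n}([x_i],[x_j])$ for every pair, and taking the maximum over pairs gives the required equality of diameters.

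For uniqueness, suppose $(g_i)$ and $(g'_i)$ are two sign sequences with $g_0=g'_0=+1$, both yielding a lift of diameter less than $\frac{1}{6}<\frac{1}{4}$. The second lift is obtained from the first by applying the sign sequence $h_i := g_i g'_i$ coordinatewise, and the resulting configuration still has diameter below $\frac{1}{4}$. Lemma~\ref{lem:diam}, applied to the first lift, then forces all $h_i$ to be equal; because $h_0=+1$, all $h_i=+1$, so $g_i=g'_i$ for every $i$.

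For tightness, I would exhibit three equally spaced points $[x_0],[x_1],[x_2]$ on a great circle of $\RP^n$ (which has circumference $\frac{1}{2}$), giving $\diam_{\RP^n}\{[x_0],[x_1],[x_2]\}=\frac{1}{6}$. Fix $x_0\in S^n$; each of $[x_1],[x_2]$ then admits two antipodal lifts on the covering great circle, at $S^n$-distances $\frac{1}{6}$ and $\frac{1}{3}$ from $x_0$. A case-by-case check of the four resulting triples shows every one has $\diam_{S^n}=\frac{1}{3}$, never $\frac{1}{6}$. Thus for any $r>\frac{1}{6}$ we have a configuration of quotient diameter $\frac{1}{6}<r$ admitting no lift that realizes the quotient diameter, so the action fails to be an $r$-diameter action and the bound $\frac{1}{6}$ is optimal.
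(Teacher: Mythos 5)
Your proposal is correct and follows the paper's own proof in all essentials: existence from Lemma~\ref{lem:diam2}, uniqueness from Lemma~\ref{lem:diam}, and tightness via three evenly-spaced points on a great circle of $\RP^n$ whose six-point preimage admits no lift of diameter $\frac{1}{6}$. Your one genuine addition is the bookkeeping step upgrading the conclusion $\diam_{S^n}<\frac{1}{6}$ of Lemma~\ref{lem:diam2} to the equality $\diam_{S^n}=\diam_{\RP^n}$ required by Definition~\ref{def:r-diameter} (via $d_{S^n}(g_ix_i,-g_jx_j)=\frac{1}{2}-d_{S^n}(g_ix_i,g_jx_j)>\frac{1}{4}$, so each pairwise lifted distance already realizes the quotient distance); the paper elides this point, and your argument correctly fills it in.
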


\begin{proof}
The existence part of the definition of an $r$-diameter action is given by Lemma~\ref{lem:diam2}, and uniqueness is given by Lemma~\ref{lem:diam}.

Furthermore, the bound $r<\frac{1}{6}$ in Lemma~\ref{lem:diam2} is tight.
Indeed, if we had $r=\frac{1}{6}$, then a counterexample is obtained by letting $\{[x_0],[x_1],[x_k]\}$ be three evenly-spaced points on a great circle of $\RP^n$ for $n\ge 1$ whose preimage is six evenly-spaced points on a great circle of $S^n$.
\end{proof}

We are now prepared to study Vietoris--Rips thickenings of projective spaces at sufficiently small scale parameters.
Let $\sim$ denote the equivalence relation on $\vrm{S^n}{r}$ induced by the canonical $\Z/2\Z$ action on $S^n$; more explicitly this equivalence relation is given by $\sum_i \lambda_i x_i\sim\sum_i \lambda_i (-x_i)$.
By definition $\RP^n\cong (S^n/\sim)$; Lemma~\ref{lem:simplicial-iso} will generalize this to say that for $r$ sufficiently small, we also have $\vrm{\RP^n}{r}\cong\vrm{S^n}{r}/\sim$.

Let $W$ be the set of all interior points of equilateral 2-simplices in $\vrleq{\RP^n}{\frac{1}{6}}$ inscribed in a great circle of $\RP^n$.
More precisely,
\[W = \left\{\sum_{i=0}^2 \lambda_i x_i ~\Big|~\lambda_i>0\mbox{ and }\{[x_0],[x_1],[x_2]\}\mbox{ is a regular 2-simplex in a great circle}\right\}.\]

\begin{lemma}\label{lem:simplicial-iso}
We have have the following isomorphisms of simplicial complexes and homeomorphisms of metric thickenings.
\begin{align*}
\vr{\RP^n}{r}&\cong\vr{S^n}{r}/\sim \mbox{ for }r<\tfrac{1}{6}\\
\vrm{\RP^n}{r}&\cong\vrm{S^n}{r}/\sim \mbox{ for }r<\tfrac{1}{6}\\
\vrless{\RP^n}{\tfrac{1}{6}}&\cong\vrless{S^n}{\tfrac{1}{6}}/\sim\\
\vrmless{\RP^n}{\tfrac{1}{6}}&\cong\vrmless{S^n}{\tfrac{1}{6}}/\sim\\
\vrleq{\RP^n}{\tfrac{1}{6}}\setminus W&\cong\vrleq{S^n}{\tfrac{1}{6}}/\sim\\
\vrmleq{\RP^n}{\tfrac{1}{6}}\setminus W&\cong\vrmleq{S^n}{\tfrac{1}{6}}/\sim
\end{align*}
\end{lemma}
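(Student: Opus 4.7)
The first four items of the lemma follow directly from Proposition~\ref{prop:group-action}: by Corollary~\ref{sphere-diam} the antipodal $G$-action on $S^n$ is a $\tfrac{1}{6}$-diameter action, so all four of its bullets apply with $t=\tfrac{1}{6}$, yielding the first four bullets of the lemma. My plan for the remaining two items is to mimic the proof of Proposition~\ref{prop:group-action} at the closed boundary $r=\tfrac{1}{6}$, where the proposition itself no longer applies because some simplices of $\vrleq{\RP^n}{\tfrac{1}{6}}$ fail to admit any lift of diameter $\le\tfrac{1}{6}$ in $S^n$. Removing $W$ from the codomain is designed precisely to cut out these obstructions.

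Specifically, I would define $h\colon \vrleq{S^n}{\tfrac{1}{6}}\to\vrleq{\RP^n}{\tfrac{1}{6}}$ (and the analogous map on metric thickenings) by $\sum\lambda_i x_i\mapsto \sum\lambda_i[x_i]$, factor through the $\sim$-quotient to obtain $\tilde h$, and show $\tilde h$ is a simplicial isomorphism (respectively a homeomorphism, for thickenings) onto $\vrleq{\RP^n}{\tfrac{1}{6}}\setminus W$. Injectivity of $\tilde h$ will then follow from Lemma~\ref{lem:diam} applied at scale $\tfrac{1}{6}<\tfrac{1}{4}$: any two sign patterns on a simplex of diameter $\le\tfrac{1}{6}$ that both keep diameter $<\tfrac{1}{4}$ must agree up to the global flip, which is exactly the $G$-orbit.

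The heart of the plan is identifying the image of $h$ with $\vrleq{\RP^n}{\tfrac{1}{6}}\setminus W$. The idea is to rerun the proof of Lemma~\ref{lem:diam2} while tracking the equality case: given a simplex $\{[x_0],\ldots,[x_k]\}\in\vrleq{\RP^n}{\tfrac{1}{6}}$, choose signs $g_i$ so that $d_{S^n}(x_0,g_i x_i)\le\tfrac{1}{6}$ and set $x_i^\ast=g_i x_i$. If some pair satisfies $d_{S^n}(x_i^\ast,x_j^\ast)>\tfrac{1}{6}$, then $d_{S^n}(x_i^\ast,-x_j^\ast)\le\tfrac{1}{6}$ and the chain
\[\tfrac{1}{2}=d(x_0,-x_0)\le d(x_0,x_i^\ast)+d(x_i^\ast,-x_j^\ast)+d(-x_j^\ast,-x_0)\le 3\cdot\tfrac{1}{6}=\tfrac{1}{2}\]
forces equality throughout, placing $x_0,x_i^\ast,-x_j^\ast,-x_0$ on a common geodesic at spacing $\tfrac{1}{6}$ and hence exhibiting $\{[x_0],[x_i],[x_j]\}$ as a regular $2$-simplex inscribed in a great circle of $\RP^n$. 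A short ball-intersection computation, showing that three geodesic $\tfrac{1}{6}$-balls in $\RP^n$ centered at three evenly spaced points of a great circle meet only at those three points, then certifies that such a triangle is a maximal simplex of $\vrleq{\RP^n}{\tfrac{1}{6}}$. Consequently the non-liftable simplices are exactly these regular $2$-simplices, and the corresponding open cells of the geometric realization assemble into $W$.

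For the metric thickening case, I would additionally note that $h$ and $\tilde h^{-1}$ are both continuous in the $1$-Wasserstein topology exactly as in the proof of Proposition~\ref{prop:group-action}, which upgrades the bijection to a homeomorphism. I expect the main obstacle to be the image-identification step, and within it the ball-intersection computation that rules out any simplex of $\vrleq{\RP^n}{\tfrac{1}{6}}$ with four or more vertices strictly containing a regular $2$-simplex on a great circle; without this maximality statement, points in the interiors of larger simplices over a bad triangle would lie in $\vrleq{\RP^n}{\tfrac{1}{6}}\setminus W$ but fail to be in the image of $h$.
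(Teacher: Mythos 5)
Your proposal is correct and follows the same route as the paper: the first four items are exactly Proposition~\ref{prop:group-action} applied via Corollary~\ref{sphere-diam}, and the two items involving $W$ are handled by re-running the lifting argument at the closed scale $\tfrac{1}{6}$. Where the paper disposes of the $\setminus W$ cases in a single sentence (asserting that once $W$ is removed every remaining simplex lifts uniquely), you make explicit the two facts that assertion relies on: the equality case of Lemma~\ref{lem:diam2} shows that the only non-liftable simplices are the equilateral $2$-simplices inscribed in great circles, and the ball-intersection computation shows these are maximal simplices of $\vrleq{\RP^n}{\tfrac{1}{6}}$, so that deleting only their open $2$-cells (with no higher-dimensional cells lying above them) is exactly what is required for $\tilde h$ to be onto the complement of $W$. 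Both facts check out --- in particular, any point of $\RP^n$ within distance $\tfrac{1}{6}$ of all three vertices of such a triangle is forced, by equality in the triangle inequality on $S^n$, to coincide with one of those vertices --- so your argument is a complete version of the paper's sketch rather than a different proof.
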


\begin{proof}
We prove the case of the simplicial complexes; the proof for metric thickenings is analogous.

We first consider the cases of $\vr{\RP^n}{r}$ with $r<\tfrac{1}{6}$, and $\vrless{\RP^n}{\frac{1}{6}}$.
The group $\mathbb{Z}/2\mathbb{Z}$ acts properly by isometries on $S^n$ and by Corollary~\ref{sphere-diam} is a $\frac{1}{6}$-diameter action.
Thus, by Proposition~\ref{prop:group-action}, we have $\vr{\RP^n}{r}=\vr{S^n/\sim}{r}$ is homeomorphic to $\vr{S^n}{r}/\sim$.

The case of $\vrleq{\RP^n}{\tfrac{1}{6}}\setminus W$ follows similarly.
Indeed, since $W$ has been removed, for any simplex $\{[x_0],\ldots,[x_k]\} \in \vrleq{\RP^n}{\tfrac{1}{6}}\setminus W$, there exists a unique choice of elements $g_i\in (\{\pm 1\},\times)$ for $1\le i\le k$ such that $\diam_{S^n}\{x_0,g_1\cdot x_1\ldots,g_k\cdot x_k\}=\diam_{\RP^n}\{[x_0],\ldots,[x_k]\}$.
\end{proof}

\subsection{Thickenings}

We now identify the scale parameters which are sufficiently small so that the Vietoris--Rips thickenings of the projective space $\RP^n$ are homotopy equivalent to $\RP^n$.
Though Hausmann's theorem~\cite{Hausmann1995,AAF} guarantees that such a sufficiently small scale parameter exists, we identify the optimal such scale because the bounds given in Hausmann's theorem are not optimal.
We restrict attention to metric thickenings instead of simplicial complexes, as those are the results needed in Section~\ref{sec:large} in order to study larger scales.

Let $f\colon S^n\to\R^{n+1}$ be the inclusion map, and extend linearly to obtain a (non-injective) map $f \colon \vrm{S^n}{r}\to\R^{n+1}$ sending a formal convex combination of points in $S^n$ to its corresponding linear combination in $\R^{n+1}$.
Let $\pi\colon \R^{n+1}\setminus\{\vec{0}\}\to S^n$ be the radial projection map.
Let $r_n$ be the diameter of an inscribed regular $(n+1)$-simplex in $S^n$.
For $r<r_n$, the image of $f \colon \vrm{S^n}{r}\to\R^{n+1}$ misses the origin in $\R^{n+1}$ by the proof of Lemma~3 in~\cite{lovasz1983self}.
Hence, we have a composite map $\pi f\colon \vrm{S^n}{r}\to S^n$.
If furthermore $r<\frac{1}{6}$, then by Lemma~\ref{lem:simplicial-iso} we get an induced map
\[\vrm{\RP^n}{r}\cong(\vrm{S^n}{r}/\sim) \xrightarrow{f/\sim} ((\R^{n+1}\setminus\{\vec{0}\})/\sim) \xrightarrow{\pi/\sim} (S^n/\sim) = \RP^n.\]
By an abuse of notation, we also denote the above composite map by $\pi f\colon \vrm{\RP^n}{r}\to\RP^n$.

\begin{figure}[htb]
\centering
\includegraphics[width=0.27\textwidth]{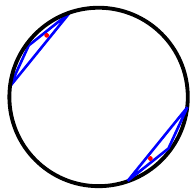}
\caption{A 2-simplex inscribed in $S^n$ (in the drawing $n=1$), along with its antipode, containing example points (drawn in red) that get identified under the map $\vrm{S^n}{r} \to (\R^{n+1}\setminus\{\vec{0}\})/\sim.$}
\label{fig:BorsukUlamRP1}
\end{figure}

\begin{theorem}\label{thm:small-scales}
The maps 
\begin{align*}
&\pi f\colon\vrm{\RP^n}{r}\to\RP^n \quad\mbox{for }r<\tfrac{1}{6}\\
&\pi f\colon\vrmless{\RP^n}{\tfrac{1}{6}}\to\RP^n \\
&\pi f\colon\vrmleq{\RP^n}{\tfrac{1}{6}}\setminus W\to\RP^n
\end{align*}
are homotopy equivalences.
\end{theorem}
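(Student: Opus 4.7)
The plan is to lift the problem to $S^n$, prove a $\Z/2\Z$-equivariant homotopy equivalence there, and descend via Lemma~\ref{lem:simplicial-iso}. That lemma identifies the three spaces $\vrm{\RP^n}{r}$ (for $r<\tfrac{1}{6}$), $\vrmless{\RP^n}{\tfrac{1}{6}}$, and $\vrmleq{\RP^n}{\tfrac{1}{6}}\setminus W$ with $\vrm{S^n}{r}/\!\sim$, $\vrmless{S^n}{\tfrac{1}{6}}/\!\sim$, and $\vrmleq{S^n}{\tfrac{1}{6}}/\!\sim$ respectively. Since $\tfrac{1}{6}<r_n$, the map $\pi f$ is well-defined on the corresponding sphere thickenings, and it is manifestly $\Z/2\Z$-equivariant because the antipodal action commutes with both linear extension $f$ and radial projection $\pi$; the three projective maps of the theorem are precisely its descents. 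Hence it suffices to show the sphere-level $\pi f$ is an equivariant homotopy equivalence.

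For the sphere-level argument, I would take as candidate homotopy inverse the canonical inclusion $\iota\colon S^n\hookrightarrow \vrm{S^n}{r}$, $x\mapsto\delta_x$, which is equivariant and satisfies $\pi f\circ\iota=\mathrm{id}_{S^n}$. For the reverse composition, define a straight-line geodesic homotopy
\[
H_t\Bigl(\sum_i \lambda_i x_i\Bigr)\;=\;\sum_i \lambda_i\,\gamma_i(t),
\]
where $\gamma_i\colon[0,1]\to S^n$ is the unique minimizing geodesic from $\gamma_i(0)=x_i$ to $\gamma_i(1)=p:=\pi f\bigl(\sum_j\lambda_j x_j\bigr)$. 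Then $H_0=\mathrm{id}$ and $H_1=\iota\circ\pi f$, equivariance is inherited from that of $\pi f$ together with the fact that the antipodal map sends geodesics to geodesics, and continuity in the 1-Wasserstein metric follows from continuous dependence of $\gamma_i$ on its endpoints together with continuity of $\pi f$.

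The main obstacle is to show that $H_t$ actually lands in $\vrm{S^n}{r}$, i.e.\ that $\diam\{\gamma_i(t)\}\le r$ for all $t$ (with strict inequality in the $<$ variants). Since $\{x_i\}$ has diameter at most $r\le\tfrac{1}{6}<\tfrac{1}{4}$, it lies in the geodesically convex closed cap $\bar B(x_0,r)$; a short computation (noting $y\cdot c\ge\cos(r)\ge\cos(r)\|y\|$ for $y$ in the Euclidean convex hull of the cap centred at $c$) shows that the radial projection of any Euclidean convex combination of points in the cap remains in the cap, so $d(p,x_i)\le r$ for every $i$ and every $x_i$ sits in the open hemisphere based at $p$. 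Placing $p$ at the north pole and writing $\alpha_i\in[0,\tfrac{\pi}{2})$ for the polar angle of $x_i$ and $\beta$ for the azimuthal angle between the meridians through $x_i$ and $x_j$, the spherical law of cosines gives
\[
\cos\bigl(d(\gamma_i(t),\gamma_j(t))\bigr)=\tfrac{1+\cos\beta}{2}\cos\bigl((\alpha_i-\alpha_j)(1-t)\bigr)+\tfrac{1-\cos\beta}{2}\cos\bigl((\alpha_i+\alpha_j)(1-t)\bigr),
\]
which is nondecreasing in $t\in[0,1]$ since both cosine arguments lie in $[0,\pi)$ where cosine is monotonically decreasing. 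Consequently $d(\gamma_i(t),\gamma_j(t))\le d(x_i,x_j)\le r$ for all $i,j$ and all $t$, so $H_t$ lands in $\vrmleq{S^n}{r}$; the strict variants follow the same way since strict diameter bounds on the support are preserved.

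Since $\pi f$, $\iota$, and $H_t$ are all $\Z/2\Z$-equivariant, they descend via Lemma~\ref{lem:simplicial-iso} to homotopy equivalences between $\RP^n$ and each of $\vrm{\RP^n}{r}$, $\vrmless{\RP^n}{\tfrac{1}{6}}$, and $\vrmleq{\RP^n}{\tfrac{1}{6}}\setminus W$, with the last case corresponding on the sphere side to the full closed thickening $\vrmleq{S^n}{\tfrac{1}{6}}$ and the homotopy respecting that identification automatically.
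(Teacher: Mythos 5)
Your proposal is correct and follows the same overall architecture as the paper: identify the three projective-space thickenings with quotients of sphere thickenings via Lemma~\ref{lem:simplicial-iso}, observe that $\pi f$ is $\Z/2\Z$-equivariant, and descend. The difference is in how the sphere-level statement is handled. The paper disposes of it in one line by citing the earlier result that $\pi f\colon\vrm{S^n}{r}\to S^n$ is a homotopy equivalence for $r<r_n$ (Theorem~4.2/5.2 of the cited reference), whereas you reprove it from scratch. Moreover, your homotopy is genuinely different from the one used in that reference: there, one takes the straight-line homotopy in the space of measures, $\mu\mapsto(1-t)\mu+t\delta_{\pi f(\mu)}$, whose support is $\mathrm{supp}(\mu)\cup\{\pi f(\mu)\}$, so only the cap/convexity estimate $d(\pi f(\mu),x_i)\le r$ is needed; you instead flow each support point along its geodesic to $\pi f(\mu)$, which additionally requires your law-of-cosines computation showing pairwise distances are nonincreasing along the flow (that computation is correct, since all polar angles are at most $2\pi r\le\pi/3$). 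Your version buys a homotopy through measures with the same number of atoms and a cleaner equivariance statement, at the cost of a harder continuity check: the 1-Wasserstein continuity of $H_t$, which you assert but which is the one place where real work is hidden (one needs that the geodesic contraction toward $p$ is $1$-Lipschitz on the relevant cap --- which your diameter computation in fact supplies --- plus uniform continuity in the moving center $p=\pi f(\mu)$). Two small wording issues worth fixing: the cap argument must be applied with each $x_j$ as center, not just $x_0$, to conclude $d(p,x_i)\le r$ for all $i$; and $H_t$ should be phrased as a pushforward so that it is visibly well defined on measures rather than on presentations $\sum_i\lambda_i\delta_{x_i}$.
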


We remark that the above theorem is also true for Vietoris--Rips simplicial complexes after adding the additional restriction that $r>0$.

\begin{proof}
We have that $\pi f\colon\vrm{S^n}{r}\to S^n$ is a homotopy equivalence by~\cite{AAF} since $r<r_n$.
Since $\pi f\colon\vrm{S^n}{r}\to S^n$ respects the identifications $\sim$, this gives that the three maps above are also homotopy equivalences.
\end{proof}

The following lemma will be used in the proof of Theorem~\ref{thm:large-scales} on the Vietoris--Rips thickenings of projective spaces at large scales.

\begin{lemma}\label{lem:bijective}
Let $\Delta$ be an equilateral 2-simplex inscribed in a great circle of $\RP^n$.
Note $\partial\Delta\subseteq\vrmleq{\RP^n}{\frac{1}{6}}\setminus W$.
The map $\pi f\colon \vrmleq{\RP^n}{\frac{1}{6}}\to \RP^n$, when restricted to $\partial \Delta$, is bijective onto its image, namely the great circle in $\RP^n$ in which $\Delta$ is inscribed.
\end{lemma}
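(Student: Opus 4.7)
The plan is to use the isomorphism $\vrmleq{\RP^n}{\frac{1}{6}} \setminus W \cong \vrmleq{S^n}{\frac{1}{6}}/\sim$ from Lemma~\ref{lem:simplicial-iso} to analyze $\pi f$ edge by edge on $\partial \Delta$. First I would observe that every point of a metric thickening has a unique expression as a formal convex combination with strictly positive coefficients on distinct vertices, so points of $\partial \Delta$ are supported on at most two of the three vertices of $\Delta$, whereas points of $W$ must be supported on three vertices forming a regular 2-simplex inscribed in a great circle. Hence $\partial \Delta \cap W = \emptyset$, which establishes the first claim and ensures $\pi f$ is defined on all of $\partial\Delta$.

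Next, write $\Delta=\{[x_0],[x_1],[x_2]\}$ and let $\Gamma\subseteq\RP^n$ be the great circle in which $\Delta$ is inscribed. The vertices $[x_i]$ partition $\Gamma$ (of circumference $\tfrac{1}{2}$) into three closed arcs of length $\tfrac{1}{6}$, while $\partial\Delta$ is itself a topological circle formed from three closed edges joined at the three vertices. I would then show that $\pi f$ sends each edge of $\partial\Delta$ homeomorphically onto the corresponding arc of $\Gamma$, with the three images meeting pairwise only at the vertices $[x_i]$.

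For an edge $E_{ij}$ of $\partial\Delta$ between $[x_i]$ and $[x_j]$, Lemma~\ref{lem:simplicial-iso} lifts $E_{ij}$, up to $\sim$, to a unique 1-simplex in $\vrmleq{S^n}{\tfrac{1}{6}}$ whose endpoints $\tilde x_i,\tilde x_j$ lie on a common great circle $C$ of $S^n$ at distance $\tfrac{1}{6}$. On this lifted edge, $f$ produces the Euclidean chord from $\tilde x_i$ to $\tilde x_j$, which misses the origin since $\tilde x_i$ and $\tilde x_j$ are not antipodal, and $\pi$ radially projects the chord bijectively onto the short geodesic arc of $C$ between them (bijectivity is immediate because $\tilde x_i,\tilde x_j$ are linearly independent, so the convex parameter $\lambda$ maps monotonically onto the arc). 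Passing to the quotient, $\pi f$ restricted to $E_{ij}$ is a bijection onto the arc of $\Gamma$ of length $\tfrac{1}{6}$ joining $[x_i]$ and $[x_j]$.

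Since the three such arcs cover $\Gamma$ and meet pairwise only at the vertices $[x_i]$, which are themselves the $\pi f$-images of the three vertices of $\partial\Delta$, the combined map $\pi f|_{\partial\Delta}$ is a bijection onto $\Gamma$. I expect the main subtlety to be that no single choice of lifts in $S^n$ realizes $\{[x_0],[x_1],[x_2]\}$ as an equilateral triple (any lift gives two pairs at distance $\tfrac{1}{6}$ and one pair at distance $\tfrac{1}{3}$), so the lifting must be carried out independently for each edge, with the quotient by $\sim$ in Lemma~\ref{lem:simplicial-iso} absorbing the necessary sign-flip on the ``long'' edge.
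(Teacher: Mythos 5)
Your proof is correct and follows essentially the same route as the paper's: both lift to the double cover $S^n$, where $\partial\Delta$ becomes an inscribed hexagon (you assemble it edge by edge, noting the sign flip on the long edge, while the paper treats the hexagon all at once), and both conclude by radially projecting the chords onto the great circle and passing to the antipodal quotient. Your explicit verification that $\partial\Delta\cap W=\emptyset$ and your edge-by-edge bookkeeping of the lifts are just a more granular write-up of the same geometric argument.
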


\begin{proof}
We will work in the unit sphere $S^n\subseteq\R^{n+1}$, which is a double cover of $\RP^n$.
Without loss of generality, the equilateral triangle $\Delta$ can be supposed to have coordinates $\pm(1,0,\ldots,0)$, $\pm(\cos{\frac{\pi}{3}},\sin{\frac{\pi}{3}},0,\ldots,0)$, and $\pm(\cos{\frac{\pi}{3}},-\sin{\frac{\pi}{3}},0,\ldots,0)$.
That is, the triangle inscribed in $\RP^n$ can be viewed as a hexagon inscribed in $S^n$.
Considering $\pi f$ to have domain $\vr{S^n}{\frac{1}{6}}$ at first, we note that the restriction of $\pi f$ to this hexagon maps bijectively onto the great circle in $S^n$ given by $(\cos\theta,\sin\theta, 0, 0,\ldots 0)$, where $\theta \in [0, 2\pi)$, with antipodal points on the hexagon mapped to antipodal points on $S^n$.
After quotienting out by the antipodal action on both the domain and codomain, i.e.\ after returning to the point of view where $\pi f$ has domain $\vr{\RP^n}{\frac{1}{6}}$, we see that $\pi f|_{\partial \Delta}\colon \partial \Delta\to \RP^n$ maps bijectively onto its image, the great circle in $\RP^n$ in which $\Delta$ is inscribed.
\end{proof}

\section{Vietoris--Rips thickenings of projective spaces at large scales}\label{sec:large}

As a subgoal of this document, we would like to identify the homotopy type of $\vrmleq{\RP^n}{r}$, the Vietoris--Rips thickenings of projective space, for larger scale parameters $r$.
We are able to describe this homotopy type for $r=\frac{1}{6}$, which is the first scale parameter where the homotopy type of $\vrmleq{\RP^n}{r}$ changes.
The proof is analogous to (but more complicated than) the proof of the homotopy type of Vietoris--Rips thickenings of the sphere in Theorem~5.4 of~\cite{AAF}.
In Section~\ref{ss:sphere-large} we recall the proof of Theorem~5.4 of~\cite{AAF}, with a few more details added, so that we can set notation and clarify the ideas.
We then modify these techniques to handle the equivariant setting of the projective space in Section~\ref{ss:projective-large}.

At larger scales, we will restrict attention to the Vietoris--Rips metric thickenings and not discuss simplicial complexes.
The reason for this is as follows.
If $S^1$ is the geodesic circle of unit circumference, then $\vrmleq{S^1}{\frac{1}{3}}\simeq S^3$ is a 3-sphere~\cite{AAF,ABF}, whereas $\vrleq{S^1}{\frac{1}{3}}\simeq\bigvee^{\mathfrak{c}}S^2$ is an uncountably infinite wedge sum of 2-spheres~\cite{AA-VRS1}.
We think of the former homotopy type as being ``correct,'' and by contrast we think of the wild homotopy type of the simplicial complex $\vrleq{S^1}{\frac{1}{3}}$ as an artifact of the fact that it is equipped with the ``wrong'' topology.
Indeed, the topology on $\vrleq{S^1}{\frac{1}{3}}$ is such that the inclusion $S^1\hookrightarrow \vr{S^1}{\frac{1}{3}}$ is not even continuous, since the vertex set of a simplicial complex is equipped with the discrete metric.
Additionally, one should think of the 3-sphere $S^3$ as being the ``right'' homotopy type at scale $\frac{1}{3}$ since for all $0<\varepsilon<\frac{1}{15}$, we have $\vr{S^1}{\frac{1}{3}+\varepsilon}\simeq S^3$.
A similar story is true for the Vietoris--Rips thickenings and simplicial complexes of $n$-spheres and projective spaces, and this is why we now restrict attention to Vietoris--Rips metric thickenings.

\subsection{Vietoris--Rips thickenings of the sphere at large scales}\label{ss:sphere-large}

Let $S^n$ be the $n$-sphere equipped with the geodesic metric.
Recall $r_n$ is the diameter of an inscribed regular $(n+1)$-simplex in $S^n$.
For $r<r_n$, we have $\vrm{S^n}{r}\simeq S^n$.
The first new homotopy type of the Vietoris--Rips thickening of the sphere $\vrmleq{S^n}{r}$ is determined in Theorem~5.4 of~\cite{AAF} to be the join $S^n * \tfrac{\so(n+1)}{A_{n+2}}$.
We first obtain a homotopy equivalence to an adjunction space
\[\vrmleq{S^n}{r_n}\simeq S^n\cup_h\Biggl(D^{n+1}\times\frac{\so(n+1)}{A_{n+2}}\Biggr),\]
where $D^{n+1}$ is the closed $(n+1)$-dimensional ball, where $\frac{\so(n+1)}{A_{n+1}}$ parametrizes all regular oriented $(n+1)$-simplices $\Delta^{n+1}$ inscribed in $S^n$, and where $h\colon S^n\times\frac{\so(n+1)}{A_{n+1}}\to S^n$ via $h(x,y)=x$.

\begin{figure}[htb]
\centering
\includegraphics[width=0.27\textwidth]{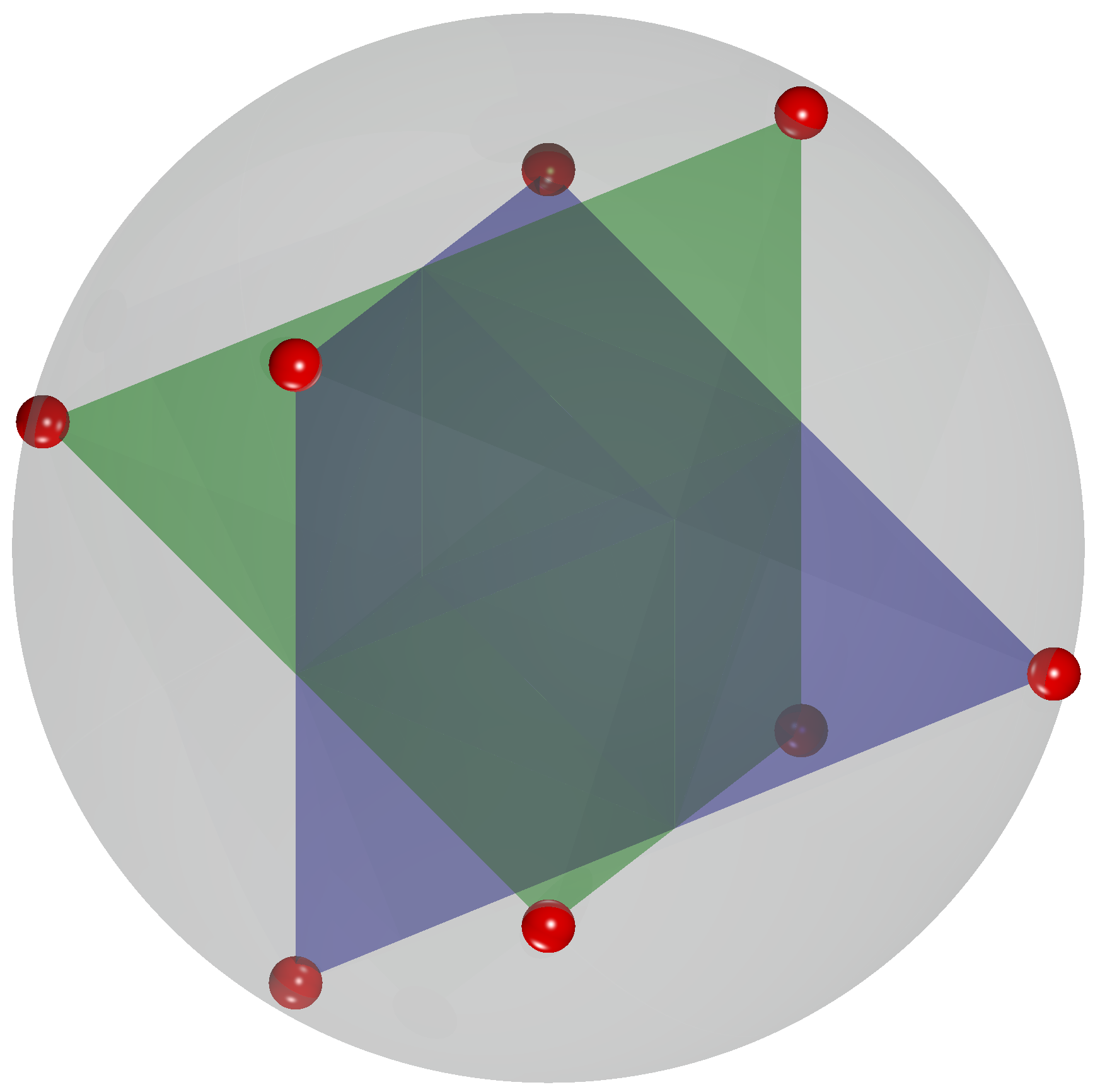}
\caption{Two regular inscribed simplices in $S^2$.}
\label{fig:tetrahedra}
\end{figure}

It takes some care to describe the parameter space $\frac{\so(n+1)}{A_{n+2}}$.
Note $\so(n+1)$ is a topological group, and as we explain in the paragraph below, the alternating group $A_{n+2}$ can be seen as a subgroup of $\so(n+1)$ even though there is no canonical way to do this.
Once $A_{n+2}$ has been identified with a subgroup of $\so(n+1)$, then $A_{n+2}$ acts on $\so(n+1)$ via left multiplication, as explained in Example~3.88(f) of~\cite{lee2010introduction}.
Therefore, we define $\frac{\so(n+1)}{A_{n+2}}$ as the quotient space or ``orbit space'' of this action, i.e.\
$\frac{\so(n+1)}{A_{n+2}}$ is $\so(n+1)/\sim$, where $\sim$ is the equivalence relation where $x \sim g\cdot x$ for all $x \in \so(n+1)$ and $g \in A_{n+2}$.
We emphasize that we are not trying to identify $A_{n+2}$ with a normal subgroup of $\so(n+1)$, nor are we trying to give $\frac{\so(n+1)}{A_{n+2}}$ the structure of a quotient group.

The above paragraph relies on identifying the alternating group $A_{n+2}$ with a subgroup of $\so(n+1)$.
To do this, fix a regular $(n+1)$-dimensional simplex inscribed in $S^n$ inside $\R^{n+1}$, with the center of the simplex at the origin.
The $(n+1)$-simplex has $n+2$ vertices, and $A_{n+2}$ as its group of rotational symmetries.
We can therefore associate each element $g\in A_{n+2}$ with an $(n+1)\times(n+1)$ rotation matrix that permutes the vertices of the simplex in the same way that $g$ does.
For example, if $n=1$, then no matter what fixed regular 2-simplex inscribed in $S^1$ in $\R^2$ one picks, the three elements of $A_3 = \Z/3\Z$ will be the rotation matrices
\[
\begin{bmatrix}1 & 0 \\ 0 & 1\end{bmatrix}, 
\begin{bmatrix}\cos \frac{2\pi}{3} & -\sin \frac{2\pi}{3} \\ \sin \frac{2\pi}{3} & \cos \frac{2\pi}{3}\end{bmatrix}, \text{ and }
\begin{bmatrix}\cos \frac{4\pi}{3} & -\sin \frac{4\pi}{3} \\ \sin \frac{4\pi}{3} & \cos \frac{4\pi}{3} \end{bmatrix}.
\]
However, if $n\ge 2$, then different choices of a fixed regular $(n+1)$-simplex inscribed in $S^n$ in $\R^{n+1}$ will give different rotation matrices corresponding to the elements of $A_{n+2}$.
Since any two ways of viewing $A_{n+2}$ as a subgroup of $\so(n+1)$ are conjugate, Exercise~24(b) in Section~1.3 of~\cite{Hatcher} implies that the homeomorphism type of $\frac{\so(n+1)}{A_{n+2}}$ does not depend on this choice.


\begin{theorem}[Theorem~5.4 of~\cite{AAF}]\label{thm:S^n-critical}
We have a homotopy equivalence 
\[\vrmleq{S^n}{r_n}\simeq S^n * \tfrac{\so(n+1)}{A_{n+2}}.\]
\end{theorem}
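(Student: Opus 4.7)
The plan is to establish the homotopy equivalence via the intermediate adjunction space $S^n\cup_h(D^{n+1}\times Y)$ flagged before the theorem, where $Y=\tfrac{\so(n+1)}{A_{n+2}}$, and then observe that this adjunction is literally the join $S^n*Y$. The main geometric input is a sharpening of the Lovász argument already used to build $\pi f\colon\vrm{S^n}{r}\to S^n$: I would prove that for $\sum\lambda_i x_i\in\vrmleq{S^n}{r_n}$, one has $f(\sum\lambda_i x_i)=0$ in $\R^{n+1}$ if and only if $\{x_i\}$ is exactly the vertex set of an inscribed regular $(n+1)$-simplex, with all weights $\lambda_i=\tfrac{1}{n+2}$. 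This is a rigidity statement: at the critical diameter $r_n$, the regular simplices are the unique spherical configurations whose convex hull captures the origin. In particular, for each $y\in Y$ there is a closed $(n+1)$-disk $D_y\subseteq\vrmleq{S^n}{r_n}$ (the geometric simplex of convex combinations of the vertices of the simplex labeled by $y$), and the zero locus of $f$ on $\vrmleq{S^n}{r_n}$ is exactly the set of disk centers $\{c_y:y\in Y\}$.

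I would then construct a map $\Phi\colon\vrmleq{S^n}{r_n}\to S^n\cup_h(D^{n+1}\times Y)$ by: on each $D_y$, using the radial parametrization inside $\R^{n+1}$ to identify $D_y\cong D^{n+1}\times\{y\}$, with $c_y$ going to the center and $\partial D_y$ to $\partial D^{n+1}\times\{y\}$ in such a way that the canonical boundary collapse $h(x,y)=x$ agrees with $\pi f|_{\partial D_y}$; on points $p\notin\bigcup_y D_y$, sending $p\mapsto\pi f(p)\in S^n$. Continuity follows from the compatibility on each $\partial D_y$. To see $\Phi$ is a homotopy equivalence, I would construct a homotopy inverse $\Psi$ by combining the delta inclusion $\delta\colon S^n\hookrightarrow\vrmleq{S^n}{r_n}$ with the natural inclusions $D_y\hookrightarrow\vrmleq{S^n}{r_n}$ on each disk, and verify $\Phi\Psi\simeq\mathrm{id}$ and $\Psi\Phi\simeq\mathrm{id}$ using straight-line 1-Wasserstein deformations, which remain inside $\vrmleq{S^n}{r_n}$ whenever the supports are confined within a single $D_y$ (since the support diameter is then bounded by $r_n$).

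For the join identification, writing $D^{n+1}$ as the cone $(S^n\times[0,1])/(S^n\times\{0\})$ and substituting, the adjunction becomes
\[(S^n\times[0,1]\times Y)/\bigl((x,0,y)\sim(x',0,y),\ (x,1,y)\sim(x,1,y')\bigr),\]
which is the standard description of the join $S^n*Y$ (after reversing the interval coordinate). Chaining the two equivalences yields the theorem.

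The main obstacle is verifying that $\Phi$ is a homotopy equivalence: the delicate point is arranging the homotopy inverse consistently across the overlap between each $D_y$ and the complement $\vrmleq{S^n}{r_n}\setminus\bigcup_y D_y$, and ensuring the straight-line 1-Wasserstein homotopies genuinely remain inside the thickening (that is, that combined supports have diameter at most $r_n$). Inside a single $D_y$ this is automatic, but near the boundary $\partial D_y$ one must reroute through the fact that $\pi f|_{\partial D_y}\colon\partial D_y\to S^n$ is a homeomorphism, allowing a smooth transition between the $D_y$-inclusion on the disk side and the $\delta$-inclusion on the sphere side.
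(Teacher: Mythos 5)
Your overall skeleton matches the paper's: isolate the regular inscribed $(n+1)$-simplices, pass to the adjunction space $S^n\cup_h\bigl(D^{n+1}\times Y\bigr)$ with $Y=\tfrac{\so(n+1)}{A_{n+2}}$, and recognize that space as the join. The rigidity statement you propose (that at the critical scale the zero locus of $f$ consists exactly of the uniformly-weighted barycenters of regular inscribed $(n+1)$-simplices) is correct and is indeed the geometric input from~\cite{AAF}. However, your execution has a genuine gap: the map $\Phi$ you define is not continuous. The set $W$ of interior points of the disks $D_y$ is \emph{not} open in the $1$-Wasserstein topology on $\vrmleq{S^n}{r_n}$: given an interior point $q=\sum\lambda_i\delta_{y_i}$ of $D_y$ (all $\lambda_i>0$, $\{y_i\}$ a regular simplex), one can perturb the support points slightly inward to get a measure $p$ of diameter $\le r_n$ that lies in no disk $D_{y'}$ yet is Wasserstein-arbitrarily close to $q$. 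Your $\Phi$ sends $p$ to $\pi f(p)\in S^n$ but sends $q$ to an interior point of the attached cell $D^{n+1}\times\{y\}$; in the adjunction space these lie in disjoint pieces (only $\partial D^{n+1}\times Y$ is glued to $S^n$), so $\Phi(p)$ does not approach $\Phi(q)$. The compatibility you check on $\partial D_y$ does not help, because the discontinuity occurs at interior points of $D_y$ approached from outside $\bigcup_y D_y$. The same obstruction dooms the proposed homotopy inverse: the straight-line Wasserstein homotopies you invoke leave $\vrmleq{S^n}{r_n}$ precisely in the transition region you flag as ``the main obstacle,'' and there is no rerouting through $\partial D_y$ that repairs this, since nearby measures outside the disks need not be near $\partial D_y$ at all.

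The paper's proof sidesteps both problems by never constructing a global map to $S^n\cup_h(D^{n+1}\times Y)$. Instead it writes $\vrmleq{S^n}{r_n}$ \emph{itself} as the adjunction space $\bigl(\vrmleq{S^n}{r_n}\setminus W\bigr)\cup_g\bigl(D^{n+1}\times Y\bigr)$, where $g$ is defined only on $S^n\times Y=\partial D^{n+1}\times Y$ via the bijections $\pi f|_{\partial\Delta_y}$, and then invokes the gluing theorem for adjunction spaces (7.5.7 of~\cite{brown2006topology} or Proposition~5.3.3 of~\cite{tom2008algebraic}): since $\pi f\colon\vrmleq{S^n}{r_n}\setminus W\to S^n$ is a homotopy equivalence satisfying $\pi f\circ g=h$, and $S^n\times Y\hookrightarrow D^{n+1}\times Y$ is a cofibration, the two adjunction spaces are homotopy equivalent. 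All the delicate homotopies you would need to build by hand are supplied abstractly by that theorem. To repair your argument, replace the explicit $\Phi$ and $\Psi$ with this gluing-lemma step; your rigidity lemma and your final cone/join manipulation can then be kept as is.
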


\begin{proof}
Let $W$ be the set of all interior points of regular $(n+1)$-simplices inscribed in $\vrleq{S^n}{r_n}$.
More precisely,
\[W = \bigl\{\textstyle{\sum_{i=0}^{n+1}}\lambda_i x_i~\big|~\lambda_i>0\mbox{ for all }i\mbox{ and }\{x_0,\ldots,x_{n+1}\}\mbox{ is a regular }(n+1)\mbox{-simplex}\bigr\}.\]
Note the closure of $W$ in $\vrleq{S^n}{r_n}$ is homeomorphic to $D^{n+1} \times \tfrac{\so(n+1)}{A_{n+2}}$.
We construct the following commutative diagram.
\begin{center}
\begin{tikzpicture}[description/.style={fill=white,inner sep=2pt}] 
\matrix (m) [matrix of math nodes, row sep=3em, 
column sep=4em, text height=1.5ex, text depth=0.25ex] 
{  
& S^n\\
D^{n+1} \times \frac{\so(n+1)}{A_{n+2}} \supseteq S^n \times \frac{\so(n+1)}{A_{n+2}} & \\
& \vrmleq{S^n}{r_n}\setminus W\\
};
\path[->,font=\scriptsize]
(m-2-1) edge node[above] {$h$} (m-1-2)
(m-2-1) edge node[auto] {$g$} (m-3-2)
(m-3-2) edge node[auto] {$\pi f$} (m-1-2)
;
\path[,font=\scriptsize]
(m-3-2) edge node[right] {$\simeq$} (m-1-2)
;
\end{tikzpicture}
\end{center}
The function $h\colon S^n \times \frac{\so(n+1)}{A_{n+2}}\to S^n$ is defined by $h(x,y)=x$.
For $y \in \tfrac{\so(n+1)}{A_{n+2}}$, let $\{y_0,\ldots,y_{n+1}\}$ be the $n+2$ vertices of the rotated regular $(n+1)$-simplex parameterized by $y$.
Let
\[\partial\Delta_y=\Biggl\{\sum_{i=0}^{n+1} \lambda_i y_i\in\vrmleq{S^n}{r_n}\setminus W~\Big|~\lambda_i=0\mbox{ for some }i\Biggr\}\] 
be the boundary of the corresponding simplex.
Note $\pi f|_{\partial\Delta_y} \colon \partial\Delta_y \to S^n$ is bijective.
Define map $g\colon S^n \times \frac{\so(n+1)}{A_{n+2}}\to\vrmleq{S^n}{r_n}\setminus W$ by letting $g(x,y)$ be the unique point of $\partial\Delta_y$ such that $\pi f(g(x,y))=x$; that is, $g(x,y)=(\pi f|_{\partial\Delta_y})^{-1}(x)$.
We have $\pi f\circ g=h$, meaning the square commutes.

We now have the following sequence of homotopy equivalences, where $C(X)$ denotes the cone of a topological space $X$.
\begin{align*}
\vrmleq{S^n}{r_n} &= \bigl((\vrmleq{S^n}{r_n}\setminus W\bigr) \cup_g \bigl(D^{n+1} \times \tfrac{\so(n+1)}{A_{n+2}}\bigr) \\
&\simeq S^n \cup_h \bigl(D^{n+1} \times \tfrac{\so(n+1)}{A_{n+2}}\bigr) \\
&\simeq \Bigl(S^n \times C\bigl(\tfrac{\so(n+1)}{A_{n+2}}\bigr)\Bigr) \cup_{S^n \times \tfrac{\so(n+1)}{A_{n+2}}} \Bigr(C(S^n) \times \tfrac{\so(n+1)}{A_{n+2}} \Bigl)\\
&= S^n * \tfrac{\so(n+1)}{A_{n+2}}.
\end{align*}
Indeed, the first line is by the definitions of $W$, of $g$, and of adjunction spaces.
The second line follows from the commutative diagram above and the homotopy invariance properties of adjunction spaces (7.5.7 of~\cite{brown2006topology} or Proposition~5.3.3 of~\cite{tom2008algebraic}).
The third line follows from these same properties of adjunction spaces, induced by contractibility of $C(\tfrac{\so(n+1)}{A_{n+2}})$.
The fourth line uses an equivalent definition for the join of two topological spaces as $Y*Z=Y\times C(Z)\cup_{Y\times Z}C(Y)\times Z$.
\end{proof}

\subsection{Vietoris--Rips thickenings of the real projective plane at large scales}\label{ss:projective-large}

We henceforth have proved that the metric thickening $\vrmleq{\RP^n}{r}$ is homotopy equivalent to $\RP^n$ for $r < \frac{1}{6}$.
The first change in homotopy type occurs when $r = \frac{1}{6}$.
Indeed, for $n = 1$, we have $\vrmleq{\RP^n}{\frac{1}{6}} = S^3$, as there is a homeomorphism between $\RP^1$ and $S^1$ which results in an isometry (up to scaling all distances by two) between $\vrmleq{\RP^1}{\frac{1}{6}}$ and $\vrmleq{S^1}{\frac{1}{3}}$.

The flavor of $\vrm{\RP^n}{r}$ is different than that of $\vrm{S^n}{r}$ at large scales $r$.
Indeed, whereas the homotopy type of $\vrm{S^n}{r}$ first changes due to the appearance of regular $(n+1)$-simplices~\cite{AAF},
the homotopy type of $\vrm{\RP^n}{r}$ first changes due to the appearance of (lower-dimensional) 2-simplices inscribed in great circles of $\RP^n$.

\begin{theorem}\label{thm:large-scales}
The metric space $\vrmleq{\RP^n}{\frac{1}{6}}$ has the homotopy type of a $(2n+1)$-dimensional CW complex.
\end{theorem}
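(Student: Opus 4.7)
The plan is to realize $\vrmleq{\RP^n}{\frac{1}{6}}$ as an adjunction space, parallel to the argument of Theorem~\ref{thm:S^n-critical}, and then to carry out a dimension count. Let $W$ be the open set of interior points of equilateral $2$-simplices inscribed in great circles of $\RP^n$, as defined just before Lemma~\ref{lem:simplicial-iso}. By the third bullet of Theorem~\ref{thm:small-scales}, the map $\pi f$ restricts to a homotopy equivalence $\vrmleq{\RP^n}{\frac{1}{6}}\setminus W\simeq\RP^n$, so the complement of $W$ has the homotopy type of an $n$-dimensional CW complex. Writing $\overline W$ for the closure of $W$ inside $\vrmleq{\RP^n}{\frac{1}{6}}$, we obtain a decomposition
\[ \vrmleq{\RP^n}{\tfrac{1}{6}} = \bigl(\vrmleq{\RP^n}{\tfrac{1}{6}}\setminus W\bigr)\cup_{\phi}\overline W, \]
where $\phi\colon\partial\overline W\to\vrmleq{\RP^n}{\frac{1}{6}}\setminus W$ is the natural inclusion of the boundary.

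Next I would describe $\overline W$ as a bundle of closed $2$-disks over a parameter space $E$ of unordered equilateral $2$-simplices inscribed in great circles of $\RP^n$. Each such simplex contributes a closed $2$-disk (the image of its barycentric parameterization) to $\overline W$, and two distinct simplices contribute disjoint interiors because the formal vertex set of a point of the metric thickening is uniquely determined. Consequently $\overline W$ is homeomorphic to the associated bundle $E_{\mathrm{ord}}\times_\Gamma D^2$, where $E_{\mathrm{ord}}$ parameterizes ordered such simplices and $\Gamma\subseteq S_3$ is the finite symmetry group of an equilateral triangle, acting on $D^2$ by permuting barycentric coordinates.

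The core dimension calculation is for $E$. The group $\so(n+1)$ acts transitively on $E_{\mathrm{ord}}$, with stabilizer of dimension $\dim\so(n-1)$, coming from rotations of the $(n-1)$-dimensional orthogonal complement of the triangle's spanning $2$-plane. Hence
\[ \dim E = \dim E_{\mathrm{ord}} = \dim\so(n+1) - \dim\so(n-1) = \frac{n(n+1)}{2} - \frac{(n-1)(n-2)}{2} = 2n-1. \]
As a quotient of a Lie group by a closed subgroup, $E$ is a smooth manifold (or at worst a smooth orbifold) of dimension $2n-1$, and hence admits a CW structure of this dimension; the $D^2$-bundle $\overline W$ then has the homotopy type of a CW complex of dimension $(2n-1)+2 = 2n+1$.

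To conclude, Lemma~\ref{lem:bijective} shows that on every $2$-disk fiber of $\overline W$ the map $\pi f$ sends the boundary $\partial D^2$ bijectively onto the inscribed great circle in $\RP^n$, so the attaching map $\phi$ is compatible with the homotopy equivalence $\vrmleq{\RP^n}{\frac{1}{6}}\setminus W\simeq\RP^n$. Applying homotopy invariance of adjunction spaces along a cofibration (as in the proof of Theorem~\ref{thm:S^n-critical}), the resulting adjunction space is homotopy equivalent to a CW complex of dimension $\max(n,2n+1)=2n+1$ for $n\ge 1$. The main obstacle I anticipate is verifying carefully that $\overline W$ really carries the claimed bundle structure over $E$ and that the boundary inclusion $\partial\overline W\hookrightarrow\vrmleq{\RP^n}{\frac{1}{6}}\setminus W$ is a cofibration, so that homotopy invariance of adjunctions applies and legitimately yields a CW model of the stated dimension.
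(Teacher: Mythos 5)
Your proposal follows essentially the same route as the paper's proof: remove the set $W$ of interiors of inscribed equilateral $2$-simplices, use Theorem~\ref{thm:small-scales} and Lemma~\ref{lem:bijective} to identify the complement with $\RP^n$ up to homotopy, and reattach a $D^2$-bundle over the $(2n-1)$-dimensional space of inscribed triangles via homotopy invariance of adjunction spaces, arriving at the same dimension count (the paper computes $2n-1$ as a circle bundle over $\Gr(2,n+1)$ rather than as $\dim\so(n+1)-\dim\so(n-1)$, but these agree). The obstacle you flag at the end is exactly what the paper addresses: rather than claiming $\overline{W}$ itself is the disk bundle (it is not, since distinct closed triangles share vertices), the paper glues an abstractly defined disk bundle $Y$ along its boundary circle bundle $Z$, and obtains the needed CW/cofibration structure by citing Putz's triangulation theorem for differentiable fiber bundles together with Corollary~IV.2.5 of~\cite{lundell2012topology}.
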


\begin{proof}
Let $\Gr(k,d)$ denote the Grassmannian of all $k$-planes through the origin in $\R^d$.
The space $\Gr(k,d)$ is a manifold of dimension $k(d-k)$.
Let $I=[0,1]$ be the closed unit interval.

We define
\[Y=\{(V,\pm y,\pm x, r)\in \Gr(2,n+1)\times \RP^n\times \RP^n\times I~| \pm y,\pm x\in V\}/\sim,\] 
where $\sim$ will be defined below.
The 2-plane $V\in \Gr(2,n+1)$ encodes a great circle in $\RP^n$, i.e., the 2-fold quotient of intersection circle of $V$ with $S^n\subseteq \R^{n+1}$.
The point $\pm y$ encodes a point along that great circle, the point $\pm x$ encodes a second point along that great circle, and the radius $r\in I$ encodes a radius inside a disk.
The identifications $\sim$ are defined as follows.
\begin{itemize}
\item $(V,\pm y,\pm x,0)\sim(V,\pm y,\pm x',0)$ for all $x$ and $x'$.
\item $(V,\pm y,\pm x, r)\sim(V,\pm y',\pm x,r)$ for any points $\pm y$ and $\pm y'$ whose angles in the great circle corresponding to $V$ are a multiple of $\frac{2\pi}{6}$ apart.
\end{itemize}
The point $(V,\pm y,\pm x, r)$ can be thought of as a point of radius $r$ at angle $\pm x$ in a disc attached to the great circle corresponding to $V$, where the boundary of that disk will be attached to $\vrmleq{\RP^n}{\frac{1}{6}}\setminus W$ via some map $g$ (defined below) along an equilateral triangle containing $\pm y$.
Indeed, observe that if $V$ and $y$ are fixed, then $\{(V, \pm y,\pm x,r)\subseteq Y~|~V=V_0,y=y_0\text{ are fixed}\}$ is homeomorphic to a disk.
The first bullet point defining $\sim$ above is since in polar coordinates, the center of any disc has radius $r=0$ and an undetermined angle that could correspond to any $\pm x$.
The second bullet point defining $\sim$ above is so that inscribed triangles with a vertex at $y$ or $y'$ (whose angles in the great circle corresponding to $V$ are a multiple of $\frac{2\pi}{6}$ apart) are identified.

We let $Z\subseteq Y$ be the subset of all points of the form $(V,\pm y,\pm x, 1)$, i.e., those points that are on some great circle.
Consider the following commutative diagram, where the vertical map is a homotopy equivalence by Theorem~\ref{thm:small-scales}.

\begin{center}
\begin{tikzpicture}[description/.style={fill=white,inner sep=2pt}] 
\matrix (m) [matrix of math nodes, row sep=3em, 
column sep=4em, text height=1.5ex, text depth=0.25ex] 
{  
& \RP^n\\
Y \supseteq Z & \\
& \vrmleq{\RP^n}{\tfrac{1}{6}}\setminus W\\
};
\path[->,font=\scriptsize]
(m-2-1) edge node[above] {$h$} (m-1-2)
(m-2-1) edge node[auto] {$g$} (m-3-2)
(m-3-2) edge node[auto] {$\pi f$} (m-1-2)
;
\path[,font=\scriptsize]
(m-3-2) edge node[right] {$\simeq$} (m-1-2)
;
\end{tikzpicture}
\end{center}

We define map $h\colon Z\to \RP^n$ by $h(V,\pm y,\pm x, 1)=\pm x$.
We define $g\colon Z\to\vrmleq{\RP^n}{\frac{1}{6}}\setminus W$ as follows.
Let $\Delta$ be the equilateral triangle containing $\pm y$ that is inscribed in the great circle corresponding to $V$.
Define $g(V, \pm y, \pm x, 1)$ to be the unique point on the 1-skeleton $\partial \Delta$ of this 2-simplex such that $\pi f(g(V, \pm y, \pm x, 1))=\pm x$; existence and uniqueness of this point follow since $\pi f|_{\partial \Delta}\colon \partial \Delta\to \RP^n$ is bijective onto its image by Lemma~\ref{lem:bijective}.
It follows that $\pi f \circ g=h$.
Therefore, we have the following homotopy equivalence.
\[
\vrmleq{\RP^n}{\tfrac{1}{6}} = \bigl(\vrmleq{\RP^n}{\tfrac{1}{6}}\setminus W\bigr) \cup_g Y \simeq \RP^n \cup_h Y,
\]
where the last step is by Theorem~\ref{thm:small-scales} and the homotopy invariance properties of adjunction spaces (7.5.7 of~\cite{brown2006topology} or Proposition~5.3.3 of~\cite{tom2008algebraic}).

It remains to show that $\RP^n \cup_h Y$ is homotopy equivalent to a $(2n+1)$-dimensional CW complex.
We begin with the torus bundle
\[T' = \{(V,\pm y,\pm x)\in\Gr(2,n+1)\times \RP^n\times \RP^n~|~\pm y,\pm x\in V\}\]
over $\Gr(2,n+1)$, with projection map $T'\to\Gr(2,n+1)$ via $(V,\pm y,\pm x)\mapsto V$.
Consider also the circle bundle
\[C' = \{(V,\pm y)\in\Gr(2,n+1)\times \RP^n~|~\pm y\in V\}\]
over $\Gr(2,n+1)$, with projection map $C'\to\Gr(2,n+1)$ via $(V,\pm y)\mapsto V$.
The space $T:=T'/\sim_1$, where $(V,\pm y,\pm x)\sim_1(V,\pm y',\pm x)$ for any points $\pm y$ and $\pm y'$ whose angles in the great circle corresponding to $V$ are a multiple of $\frac{2\pi}{6}$ apart, is also a torus bundle over $\Gr(2,n+1)$.
Hence $T$ is a manifold of dimension two more than $\dim(\Gr(2,n+1))=2(n-1)$, meaning $\dim(T)=2n$.
Similarly, the space $C:=C'/\sim_1$, where $(V,\pm y)\sim_1(V,\pm y')$ is defined analogously, is a circle bundle over $\Gr(2,n+1)$.
The space $T\times I$ is therefore a $(2n+1)$-dimensional manifold with boundary, and hence a $(2n+1)$-dimensional CW complex. Finally we claim that $Y=(T\times I)/\sim_2$, where $(V,\pm y,\pm x,0)\sim_2(V,\pm y,\pm x',0)$ for all $x$ and $x'$, is also a $(2n+1)$-dimensional CW complex.
Indeed, note that $T\times\{0\}$ is a CW subcomplex of $T\times I$.
The map $q\colon T\times\{0\}\to C$ defined by $(V,\pm y,\pm x,0)\mapsto (V,\pm y)$ is a differentiable fiber bundle (with circular fibers).
Hence Corollary~2.2 of~\cite{putz1967triangulation} states that we can put simplicial complex structures on $T\times \{0\}$ and $C$ so that $q$ is simplicial; see also~\cite{verona1979triangulation}. 
Since $q$ is cellular, the adjunction space $C\cup_q (T\times I)$ is a CW complex, and so we we have that
\[ Y=((T\times I)/\sim_2) \cong C\cup_q (T\times I)\]
is a $(2n+1)$-dimensional CW complex.
To see that $Z$ is a CW subcomplex of $Y$, note that $Z$ sits inside $Y$ as $T\times \{1\}$.
It follows from Corollary~IV.2.5\footnote{Related results are Theorem~II.5.11 of~\cite{lundell2012topology} or Theorem~II.4.3 of~\cite{whitehead2012elements}, which furthermore implies that if $h$ is a cellular map, then $\RP^n\cup_h Y$ is a CW complex.} of~\cite{lundell2012topology} that the adjunction space $\RP^n \cup_h Y$ is homotopy equivalent to a $(2n+1)$-dimensional CW complex.
\end{proof}

We obtain as a consequence the following corollary.
We remark that this corollary is far from obvious, as $\vrmleq{\RP^n}{\frac{1}{6}}$ is in some sense ``infinite dimensional.''

\begin{corollary}
Since $\vrmleq{\RP^n}{\frac{1}{6}}$ has the homotopy type of a $(2n+1)$-dimensional CW complex, its homology and cohomology groups are trivial in dimensions $2n+2$ and larger.
\end{corollary}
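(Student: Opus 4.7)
The plan is straightforward: combine Theorem~\ref{thm:large-scales} with the homotopy invariance of (co)homology and the standard truncation argument from cellular (co)homology. Since singular homology and cohomology are homotopy invariants, by Theorem~\ref{thm:large-scales} it suffices to establish the vanishing for any $(2n+1)$-dimensional CW complex $K$ that is homotopy equivalent to $\vrmleq{\RP^n}{\frac{1}{6}}$.

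For such a $K$, I would recall that cellular homology $H_k^{\mathrm{CW}}(K)$ is naturally isomorphic to singular homology $H_k(K)$, and that the cellular chain group $C_k^{\mathrm{CW}}(K)$ is free abelian on the set of $k$-cells of $K$. Since $K$ has no cells of dimension $\geq 2n+2$, we have $C_k^{\mathrm{CW}}(K)=0$ for all $k \geq 2n+2$, and hence $H_k(K) = 0$ in these dimensions. For cohomology with coefficients in any abelian group $A$, the cellular cochain group $C^k_{\mathrm{CW}}(K;A) = \mathrm{Hom}(C_k^{\mathrm{CW}}(K), A)$ is likewise zero for $k \geq 2n+2$, so $H^k(K;A) = 0$ there as well.

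There is essentially no substantive obstacle remaining: the real technical content of the corollary lies entirely in Theorem~\ref{thm:large-scales}. The only point worth flagging explicitly is the interpretation of ``homotopy type of a $(2n+1)$-dimensional CW complex'': it must mean the existence of a CW representative \emph{all of whose cells have dimension at most $2n+1$}, so that the cellular (co)chain complex genuinely truncates above dimension $2n+1$. This is precisely the form in which Theorem~\ref{thm:large-scales} is proved, so the corollary follows immediately.
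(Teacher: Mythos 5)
Your argument is correct and is exactly the standard reasoning the paper implicitly relies on (the paper states the corollary without further proof, as an immediate consequence of Theorem~\ref{thm:large-scales} via homotopy invariance and the vanishing of cellular (co)chain groups above the top cell dimension). Your explicit flag about what ``homotopy type of a $(2n+1)$-dimensional CW complex'' must mean is a reasonable precaution and matches how the theorem is actually proved.
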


\begin{conjecture}
We conjecture that there is some $\varepsilon>0$ sufficiently small such that for all $0<\delta<\varepsilon$, the homotopy types of $\vrm{\RP^n}{\frac{1}{6}+\delta}$ and $\vr{\RP^n}{\frac{1}{6}+\delta}$ are equal to that of $\vrmleq{\RP^n}{\frac{1}{6}}$.
\end{conjecture}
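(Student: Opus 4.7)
The plan is to prove the two homotopy equivalences separately, dealing first with the metric thickening $\vrm{\RP^n}{\frac{1}{6}+\delta}$ and then deducing the simplicial case. Throughout, I would lift the analysis to $S^n$ via the double cover, using Lemma~\ref{lem:diam} and Corollary~\ref{sphere-diam} to control ambiguity of lifts. The key geometric observation is that the critical obstruction at $r=\frac{1}{6}$ comes from a codimension-positive family of inscribed-hexagon configurations in great circles of $S^n$, parametrized by $\Gr(2,n+1)$ together with a rotational parameter, as encoded by the space $Y$ of Theorem~\ref{thm:large-scales}.

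First, I would construct an explicit deformation retraction $F\colon\vrm{\RP^n}{\frac{1}{6}+\delta}\times[0,1]\to\vrm{\RP^n}{\frac{1}{6}+\delta}$ onto $\vrmleq{\RP^n}{\frac{1}{6}}$. For a measure $\mu=\sum\lambda_i[x_i]$ whose support diameter lies in $(\frac{1}{6},\frac{1}{6}+\delta]$, I would show that for $\delta$ sufficiently small there is a canonical 2-plane $V(\mu)\in\Gr(2,n+1)$ such that a lift of $\mathrm{supp}(\mu)$ to $S^n$ lies in a small neighborhood of a regular hexagon inscribed in $V(\mu)$. The retraction would then rotate each antipodal pair of arcs of the hexagonal configuration radially along $V(\mu)$ until the support diameter contracts to exactly $\frac{1}{6}$, holding measures with subcritical support fixed. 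Continuity of the assignment $\mu\mapsto V(\mu)$ for small $\delta$ should follow from a transversality argument once the relevant stratum of $\vrm{\RP^n}{\frac{1}{6}+\delta}$ has been isolated.

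Second, for the simplicial complex version $\vr{\RP^n}{\frac{1}{6}+\delta}$, I would compare it directly to the metric thickening at the same scale. The expectation is that at any scale $r$ strictly between two successive critical scales, the natural vertex-preserving identity map induces a weak homotopy equivalence $\vrm{\RP^n}{r}\to\vr{\RP^n}{r}$; this can be attempted via a barycentric-subdivision argument respecting the common stratification of both spaces by diameter-of-support, analogous to the comparison results for $S^1$ implicit in~\cite{AAFPP-J, AA-VRS1} and used in the metric-thickening framework of~\cite{AAF}. Combining this comparison with the first step and with Theorem~\ref{thm:large-scales} would complete the proof.

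The main obstacle is the construction of the canonical plane $V(\mu)$ and the resulting retraction: the map $\mu\mapsto V(\mu)$ must be globally continuous, yet as the support diameter approaches $\frac{1}{6}$ from above, the hexagonal structure degenerates to an equilateral triangle, a short arc, or a two-cluster configuration (in the sense of Lemma~\ref{lem:diam2}), and $V(\mu)$ may become ambiguous or ill-defined. Overcoming this requires stratifying $\vrm{\RP^n}{\frac{1}{6}+\delta}\setminus\vrmleq{\RP^n}{\frac{1}{6}}$ by proximity to each lower-dimensional critical configuration and patching retractions together compatibly along their common boundaries. A secondary obstacle is quantitative: $\varepsilon$ must be chosen small enough that no new critical configurations intervene, and verifying the absence of secondary critical scales in a definite interval above $\frac{1}{6}$ seems to require geometric input beyond the filling-radius estimate of~\cite{katz1983filling} invoked earlier, which is precisely why the statement remains a conjecture rather than a theorem.
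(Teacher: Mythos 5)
This statement is one the paper does not prove: it is stated as an open conjecture, so there is no proof of record to compare against, and the question is only whether your proposal closes the gap. It does not, and each of your two main steps contains a genuine hole. The first is the claimed canonical plane $V(\mu)$. It is false that, for $\delta$ small, every measure in $\vrm{\RP^n}{\tfrac{1}{6}+\delta}$ with support diameter in $(\tfrac{1}{6},\tfrac{1}{6}+\delta]$ lifts into a small neighborhood of a regular hexagon inscribed in a great circle. For $n\ge 2$, take three points of $\RP^n$ pairwise at distance $\tfrac{1}{6}+\tfrac{\delta}{2}$ whose lifts to $S^n$ span a $3$-dimensional subspace of $\R^{n+1}$; such ``fat'' triples exist because the Gram matrix with unit diagonal and off-diagonal entries $\cos\bigl(2\pi(\tfrac{1}{6}+\tfrac{\delta}{2})\bigr)\in(0,\tfrac{1}{2})$ is positive definite. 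This configuration lies in $\vrm{\RP^n}{\tfrac{1}{6}+\delta}\setminus\vrmleq{\RP^n}{\tfrac{1}{6}}$, lifts isometrically to $S^n$ with no sign ambiguity, is far from every inscribed hexagon, and determines no canonical $2$-plane, so your rotation move is undefined on it. The inscribed triangles of $W$ are special precisely because they fail to lift (Corollary~\ref{sphere-diam}), but the simplices newly admitted at scale $\tfrac{1}{6}+\delta$ are not concentrated near $W$: most of them have no great-circle structure at all. Consequently the ``stratify and patch'' step you defer to the end is not a technical afterthought; it is the entire content of the problem, and nothing in the proposal indicates how to make the competing retractions (hexagonal rotation near $W$, naive shrinking elsewhere) agree continuously on overlaps.

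The second hole is the comparison between the thickening and the simplicial complex at the same scale. The map you invoke, from $\vrm{\RP^n}{r}$ to $\vr{\RP^n}{r}$, is not even continuous, for the same reason the paper notes that $X\hookrightarrow\vr{X}{r}$ fails to be continuous when $X$ is not discrete: the simplicial complex carries the weak topology with discrete vertex set. The natural continuous map runs in the opposite direction, $\vr{\RP^n}{r}\to\vrm{\RP^n}{r}$, and whether it is a (weak) homotopy equivalence is itself a well-known open problem in the metric-thickening framework of~\cite{AAF}; it does not follow from barycentric subdivision, nor from the circle computations of~\cite{AA-VRS1,AAFPP-J}, which are carried out separately in each category rather than by comparing the two. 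So this step silently assumes a statement at least as hard as the conjecture you are trying to prove. To your credit, you identify the correct geometric actors (the hexagon configurations, the parameter space built from $\Gr(2,n+1)$, and the space $Y$ of Theorem~\ref{thm:large-scales}), and your final paragraph honestly flags both obstacles; but as written this is a research program rather than a proof, which is consistent with the paper's own decision to leave the statement as a conjecture.
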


\begin{question}
What are the homotopy types of $\vrm{\RP^n}{r}$ at larger scale parameters $r>\frac{1}{6}$, and, in particular, what is the smallest value of $\varepsilon>0$ for which which we obtain a new homotopy type $\vrmleq{\RP^n}{\frac{1}{6}+\varepsilon}\not\simeq\vrmleq{\RP^n}{\frac{1}{6}}$?
\end{question}

\section{Conclusion}

We have initiated the study of what happens when a group acts on a metric space, and hence also on its Vietoris--Rips simplicial complex and metric thickening, at intermediate scale parameters.
We show that, for small enough scale parameters $r$, both the simplicial complex $\vr{X/G}{r}$ and the metric thickening $\vrm{X/G}{r}$ are homotopy equivalent to $\vr{X}{r}/G$ and $\vrm{X}{r}/G$, respectively.
We give precise quantitative control on which scale parameters $r$ are small enough and provide a similar result for \v{C}ech complexes.
We further extend these results to analyze the homotopy types of Vietoris--Rips thickenings of real projective spaces at the first scale parameter where their homotopy types change.

We end with a description of a few open questions motivated by this work.

\begin{question}
What are the homotopy types of the \v{C}ech complexes $\cech{\RP^n}{r}$ of projective spaces?
We note that the action of $G=(\{\pm1\},\times)\cong\Z/2\Z$ on $S^n$ is an $r$-nerve action for all $r<\frac{1}{8}$, where the circumference of a great circle in $S^n$ is 1 (and so the circumference of a great circle in $\RP^n$ is $\frac{1}{2}$).
Hence $\cech{\RP^n}{r}\simeq \RP^n$ for all $r<\frac{1}{8}$.
What are the homotopy types of $\cech{\RP^n}{r}$ at larger scales?
\end{question}

\begin{question}
We note that $\RP^3$ is just one example of a spherical 3-manifold, i.e., a quotient space $S^3/G$ where $G$  is a finite subgroup of $\so(4)$ acting freely by rotations.
What can one say about Vietoris--Rips thickenings of other spherical manifolds?
\end{question}

\begin{question}
In addition, what can be said about lens spaces?
Let $S^{2n-1}$ be the unit sphere in complex $n$-dimensional space $\mathbb{C}^n$.
For integers $p, \ell_1, \ldots, \ell_n$ with each $\ell_i$ relatively prime to $p$, we define the lens space $L(p;\ell_1,\ldots,\ell_n)$ to be the quotient of $S^{2n-1}$ under the action of $\Z/p\Z$ generated by
\[(x_1,\ldots,x_n)\mapsto(e^{2\pi \ell_1/p}x_1,\ldots,e^{2\pi \ell_n/p}x_n).\]
See Example~2.43 of~\cite{Hatcher}.
Any such lens space has fundamental group $\Z/p\Z$.
Interestingly, different choices of the $\ell_i$'s can produce lens spaces that are either homeomorphic, homotopy equivalent but not homeomorphic, or not homotopy equivalent.
What can be said about the homotopy types of Vietoris--Rips thickenings of lens spaces?
\end{question}

\section*{Acknowledgement}
We would like to thank Amit Patel for bringing the papers~\cite{putz1967triangulation,verona1979triangulation}, used in the proof of Theorem~\ref{thm:large-scales}, to our attention.
We would also like to thank the authors of~\cite{Adamaszek2020} for helpful conversations.
This material is based upon work supported by the National Science Foundation under Grants No.\ 1633830, 1712788, 1830676, and 1934725.

\nocite{virk2017approximations}

\bibliographystyle{plain}
\bibliography{MetricThickeningsAndGroupActions}

\end{document}